\def\tilL{\(\widetilde{\mathcal{L}}\)}
\def\R3{\(\mathbb{R}^{3}\)}
\newtheorem{definition}{Definition}
\newtheorem{theorem}{Theorem}
\newtheorem{proposition}{Proposition}
\newtheorem{corollary}{Corollary}
\newtheorem{rem}{Remark}
\begin{document}

\title[Projected Symmetries]{Hexagonal Projected Symmetries}

\author[J.F. Oliveira, S.B.S.D.Castro, I.S.Labouriau]{Juliane F. Oliveira, Sofia B.S.D. Castro, Isabel S. Labouriau}
\address{Centro de Matem\'atica da Universidade do Porto, Rua do Campo Alegre 687, 4169-007, Porto, Portugal \\ and
J.F. Oliveira, I.S.Labouriau --- Departamento de Matem\'atica, Faculdade de Ci\^encias da Universidade do Porto,Portugal\\
S.B.S.D.Castro --- Faculdade de Economia, Universidade do Porto, Rua Dr. Roberto
Frias, 4200-464, Porto, Portugal}

\maketitle                        

\begin{abstract}
		In the study of pattern formation in symmetric physical systems a 3-dimensional structure in thin domains is often modelled as 2-dimensional one. We are concerned with functions in \(\mathbb{R}^{3}\) that are invariant under the action of a crystallographic group and the symmetries of their projections into a function defined on a plane. We obtain a list of the crystallographic groups for which the projected functions have a hexagonal lattice of periods. The proof is constructive and the result may be used in the study of observed patterns in thin domains, whose symmetries are not expected in 2-dimensional models, like the black-eye pattern.
\end{abstract}

\section{Introduction}

	This article is aimed at two sets of readers: crystallographers and bifurcation theorists. What is obvious to one set of readers is not necessarily so to the other. 
Concepts are also subject to alternative, substantially different, statements.
We try to provide a bridge between the two viewpoints, conditioned by our background in bifurcations.

	In the study of crystals and quasicrystals, projection is a mathematical tool for lowering dimension, \cite{senechal96}, \cite{Koca14}. A well developed study in crystallographic groups, their subgroups and  the notion of projection used in crystallography can be found in the International Tables for Crystallography (ITC) volume A \cite{int-tab-a} and ITC volume E \cite{int-tab-e}. Tables therein provide information on projections of elements of crystallographic groups. 
	
	However, we intend to use crystallographic groups for a different purpose. 
The symmetries of solutions of partial differential equations, under certain boundary conditions, form a crystallographic group --- see, for instance Golubitsky and Stewart \cite[chapter 5]{goluste02}. The set of all level curves of these functions is interpreted as a pattern. 
	In order to study 3-dimensional patterns observed in a 2-dimensional environment, we use the projection of  
symmetric
 functions as defined in section 2.  The symmetry group of the projected functions does not necessarily coincide with that of projections used in crystallography. 
The  information contained in the  ITC  \cite{int-tab-a,int-tab-e},  has to be organised in a different way before it can be used for this purpose, and this is the object of the present article.

	Regular patterns are usually seen directly in nature and experiments. Convection, reaction-diffusion systems and the Faraday waves experiment comprise three commonly studied pattern-forming systems, see for instance \cite{busse78}, \cite{turing52}, \cite{crawgola93}. 
	
	Equivariant bifurcation  theory has been used extensively to study pattern formation via symmetry-breaking steady state bifurcation in various physical systems modelled by  \(E(n)\)-equivariant partial differential equations. In Golubitsky and Stewart \cite[chapter 5] {goluste02} there is a complete description of this method used, for example, in  Dionne and Golubitsky \cite{digolu92}, Dionne \cite{dionne93}, Bosch Vivancos et al. \cite{vichome95}, Callahan and Knobloch \cite{calkno97} and Dionne et al. \cite{disilske97}, where the spatially periodic patterns are sometimes called planforms.
	
	The pattern itself and its observed state can occur in different dimensions. This happens for instance when an experiment is done in a 3-dimensional medium but the patterns are only observed on a surface, a 2-dimensional object. This is the case for reaction-diffusion systems in the Turing instability regime, \cite{turing52}, 
which have often been described using a 2-dimensional representation \cite{ouswi95}. The interpretation of this 2-dimensional outcome is subject to discussion: the black-eye pattern observed by \cite{ouswi95} has been explained both as a mode interaction, \cite{gunaouswi94}, and as a suitable projection of a 3-dimensional into a 2-dimensional lattice \cite{gomes99}. In her article, Gomes shows how a 2-dimensional hexagonal pattern can be produced by a specific projection of a Body Centre Cubic (bcc) lattice. 	
									
	Pinho and Labouriau \cite{pinlab14} study projections in order to understand how these affect symmetry. Their necessary and sufficient conditions for identifying projected symmetries are used extensively in our results. 
	
	Motivated by the explanation of Gomes \cite{gomes99} we look for all 3-dimensional lattices that exhibit a hexagonal projected lattice. We illustrate our results using the Primitive Cubic lattice.	

   \section{Projected Symmetries}
   
	 The study of projections is related to patterns. Patterns are level curves of functions \(f: \mathbb{R}^{n+1}\rightarrow\mathbb{R}\). In our work we suppose that these functions are invariant under the action of a particular subgroup of the Euclidean group: a crystallographic group.

	The Euclidean group, \(E(n+1)\), is the group of all isometries on \(\mathbb{R}^{n+1}\), also described by the semi-direct sum \(E(n+1)\cong \mathbb{R}^{n+1}\dotplus O(n+1)\), with elements given as an ordered pair \((v,\delta)\), in which \(v \in \mathbb{R}^{n+1}\) and \(\delta\) is an element of the orthogonal group \(O(n+1)\) of dimension \(n+1\). 

	Let \(\Gamma\) be a subgroup of \(E(n+1)\). The homomorphism 
\[\begin{array}{cccc}
\phi: & \Gamma & \rightarrow & O(n+1)\\
      & (v,\delta) & \mapsto & \delta
\end{array}\]
has as image a group \(\mathbf{J}\), called the \emph{point group} of \(\Gamma\), and its kernel forms the \emph{translation subgroup} of \(\Gamma\). 

	We say that the translation subgroup of \(\Gamma\) is an \emph{\((n+1)\)-dimensional lattice}, \( \mathcal{L}\) , if it is generated over the integers by \(n+1\) linearly independent elements \(l_{1},\cdots,l_{n+1} \in \mathbb{R}^{n+1}\), which we write:
\[\mathcal{L} = \langle l_{1},\cdots,l_{n+1}\rangle_{\mathbb{Z}}\] 

	A \emph{crystallographic group} is a subgroup of \(E(n+1)\), such that its translation subgroup is an \((n+1)\)-dimensional lattice.
	
A description of these concepts can be found in the ITC volume A  \cite{int-tab-a}, chapter 8.1, pp. 720-725 and in their suggested bibliography for the chapter;  see also \cite{miller72} .
	
	To get symmetries of objects in \(\mathbb{R}^{n+1}\), consider the group \emph{action} of \(E(n+1)\) on \(\mathbb{R}^{n+1}\) given by the function:
\begin{equation} \label{action}
\begin{array}{ccc}
E(n+1)\times\mathbb{R}^{n+1} & \rightarrow & \mathbb{R}^{n+1} \\
((v,\delta), (x,y)) & \mapsto & (v,\delta)\cdot(x,y) = v + \delta(x,y)
\end{array}
\end{equation}

	In \cite{armstrong88}, the reader can see that the action \eqref{action} restricted to a point group of a crystallographic group leaves its translation subgroup \( \mathcal{L}\) \ invariant. The largest subgroup of \(O(n+1)\) that leaves \( \mathcal{L}\) \ invariant forms the \emph{holohedry} of \( \mathcal{L}\) \ and is denoted by \(H_{\mathcal{L}}\). The holohedry is always a finite group, see \cite{senechal96}, subsection 2.4.2. 
Note that the term \textit{holohedry} used here, as well as in \cite{digolu92} and  \cite{goluste02}, corresponds in \cite{int-tab-a}, chapter 8.2, to the definition of point symmetry of the lattice.

	Crystallographic groups are related to symmetries of pattern formation by the action of the group of symmetries on a space of functions, 
\cite{goluste02} chapter 5.
To see this, observe that \eqref{action} induces an action of a crystallographic group \(\Gamma\) on the space of functions \(f:\mathbb{R}^{n+1}\rightarrow\mathbb{R}\) by:
	
\[(\gamma\cdot f)(x,y) = f(\gamma^{-1}(x,y)) \ \mbox{for} \ \gamma \in \Gamma \ \mbox{and} \ (x,y) \in \mathbb{R}^{n+1}\]

	Thus, we can construct a space \(\mathcal{X}_{\Gamma}\) of \(\Gamma\)-invariant functions, that is
\[\mathcal{X}_{\Gamma} = \lbrace f:\mathbb{R}^{n+1}\rightarrow\mathbb{R}; \ \gamma\cdot f = f, \ \forall \gamma \in \Gamma \rbrace\] 
	
	In particular a \(\Gamma\)-invariant function is \( \mathcal{L}\) -invariant. 
	
	An \emph{ \( \mathcal{L}\) -symmetric pattern} or \emph{ \( \mathcal{L}\) -crystal pattern} consists of the 
	 set of all
 level curves of a function \(f:\mathbb{R}^{n+1}\rightarrow\mathbb{R}\) with periods in the lattice  \( \mathcal{L}\) .	
	
	In \cite{gomes99} the black-eye pattern is obtained as a projection of a function, whose level sets form a bcc pattern in \(\mathbb{R}^{3}\). In terms of symmetries, the black-eye is a hexagonal pattern, as we can see in \cite{gomes99}, it is the level sets of a  bidimensional function with periods in a hexagonal plane lattice, that is, a lattice that admits as its holohedry a group isomorphic to the dihedral group of symmetries of the regular hexagon, \(D_{6}\). 
Moreover, we expect  the point group of symmetries of the black-eye to be isomorphic  to \(D_{6}\).

	For \(y_{0} > 0\), consider the restriction of \(f \in \mathcal{X}_{\Gamma}\) to the region between the hyperplanes \(y=0\) and \(y = y_{0}\). The projection operator \(\Pi_{y_{0}}\) integrates this restriction of \(f\) along the width \(y_{0}\), yielding a new function with domain \(\mathbb{R}^{n}\).
	
\begin{definition}\label{defProjection}
	For \(f \in \mathcal{X}_{\Gamma}\) and \(y_{0} > 0\), the \textbf{projection operator} \(\Pi_{y_{0}}\) is given by:
	\[\Pi_{y_{0}}(f)(x) = \displaystyle \int _{0}^{y_{0}}f(x,y)dy\]
\end{definition}	

	The region between \(y = 0\) and \(y = y_{0}\) is called the \textbf{projection band} and \(\Pi_{y_{0}}(f):\mathbb{R}^{n}\rightarrow\mathbb{R}\) is the \textbf{projected function}.
	
	The functions \(\Pi_{y_{0}}(f)\) may be invariant under the action of some elements of the group \(E(n)\cong \mathbb{R}^{n}\dotplus O(n)\). The relation between the symmetries of \(f\) and those  of \(\Pi_{y_{0}}(f)\) was provided by Pinho and Labouriau \cite{pinlab14}. 
	
	To find the group of symmetries of the projected functions \(\Pi_{y_{0}}(\mathcal{X}_{\Gamma})\), the authors consider the following data: 

\begin{itemize}
\item for \(\alpha\in O(n)\), the elements of $O(n+1)$:
\[\sigma := \left(\begin{array}{cc}
I_{n} & 0 \\ 
0 & -1
\end{array}\right), \ \alpha_{+} := \left(\begin{array}{cc}
\alpha & 0 \\ 
0 & 1
\end{array}\right) \ \mbox{and} \ \  \alpha_{-} := \sigma\alpha_{+};\]

\item the subgroup \(\widehat{\Gamma}\) of \(\Gamma\), whose elements are of the form
\[\left((v,y),\alpha_\pm\right);\ \alpha \in O(n), \ (v,y) \in \mathbb{R}^{n};\times\mathbb{R} \]
 the translation subgroup of \(\widehat{\Gamma}\) and \(\Gamma\) are the same, while the point group of \(\widehat{\Gamma}\) consists of those elements of \(\Gamma\) that fix the space \(\lbrace (v,0) \in \mathbb{R}^{n}\rbrace\).

	For the  3-dimensional case  \(\widehat{\Gamma}\) coincides with the  \emph{scanning group} defined in \cite{int-tab-e}, chapter 5.2.

\item and the projection \(h:\widehat{\Gamma}\rightarrow E(n)\cong\mathbb{R}^{n}\dotplus O(n)\) given by:
\[h\left( (v,y),\alpha_\pm \right)=(v,\alpha)\]

\end{itemize}

	The group of symmetries of \(\Pi_{y_{0}}(\mathcal{X}_{\Gamma})\) is the image by the projection \(h\) of the group \(\Gamma_{y_{0}}\) defined as: 
\begin{itemize}
\item If \((0,y_{0})\in \mathcal{L}\) then \(\Gamma_{y_{0}}=\widehat{\Gamma}\).
\item If \((0,y_{0})\notin \mathcal{L}\) then \(\Gamma_{y_{0}}\) contains only those elements of \(\widehat{\Gamma}\) that are either 
side preserving
\(\left((v,0),\alpha_{+}\right)\) or 
side reversing
\(\left((v,y_{0}),\alpha_{-}\right)\)
\end{itemize}

	The group \(\widehat{\Gamma}\) consists of those elements of \(\Gamma\) that will contribute to the symmetries of the set of projected functions.  Depending on  whether the hypotheses above hold, the group  \(\Gamma_{y_{0}}\)  will be either the whole group \(\widehat{\Gamma}\) or a subperiodic group of \(\widehat{\Gamma}\), that is a subgroup whose lattice of translations has lower dimension than the space on which the group acts; see \cite{int-tab-a} chapter 8.1 and \cite{int-tab-e} chapter 1.2. 

	The group \(\Gamma_{y_{0}}\) depends on how the elements of \(\Gamma\) are transformed by the projection 	\(\Pi_{y_{0}}\)  of \(\Gamma\)-invariant functions. The criterion that clarifies the connection between the symmetries of \(\mathcal{X}_{\Gamma}\) and \(\Pi_{y_{0}}(\mathcal{X}_{\Gamma})\) is provided by the following result:
	
\begin{theorem}[Theorem 1.2 in \cite{pinlab14}]\label{thPinho}
	All functions in \(\Pi_{y_{0}}(\mathcal{X}_{\Gamma})\) are invariant under the action of \((v,\alpha)\in E(n)\) if and only if one of the following conditions holds:
\begin{itemize}
\item[I]$((v,0),\alpha_{+})\in \Gamma$;
\item[II]$((v,y_{0}),\alpha_{-})\in \Gamma$;
\item[III]$(0,y_{0})\in \mathcal{L}$ and either $((v,y_{1}),\alpha_{+})\in \Gamma$ or $((v,y_{1}),\alpha_{-})\in \Gamma$, for some $y_{1}\in\mathbb{R}$.
\end{itemize}
\end{theorem}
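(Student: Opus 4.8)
The plan is to prove the two implications separately. The \emph{sufficiency} of each condition follows from a direct computation with the defining integral of $\Pi_{y_0}$, whereas the \emph{necessity} requires a Fourier-analytic argument establishing that the three cases are exhaustive.

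For sufficiency I would use that, for $f\in\mathcal{X}_{\Gamma}$, membership $((w,c),\beta)\in\Gamma$ is equivalent to the pointwise identity $f\bigl((w,c)+\beta(x,y)\bigr)=f(x,y)$. Feeding each prescribed element into this identity and substituting into $\Pi_{y_0}(f)(x)=\int_{0}^{y_0}f(x,y)\,dy$ gives the result. Under I the element $((v,0),\alpha_{+})$ yields $f(v+\alpha x,y)=f(x,y)$, so the two integrals agree term by term. Under II the element $((v,y_0),\alpha_{-})$ yields $f(v+\alpha x,y_0-y)=f(x,y)$, and the change of variables $y\mapsto y_0-y$, which maps the band $[0,y_0]$ onto itself, turns $\int_{0}^{y_0}f(v+\alpha x,y)\,dy$ into $\int_{0}^{y_0}f(x,y)\,dy$. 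Under III the hypothesis $(0,y_0)\in\mathcal{L}$ makes $f$ periodic of period $y_0$ in $y$, so after the shift (for $\alpha_{+}$) or reflection (for $\alpha_{-}$) induced by $((v,y_1),\alpha_{\pm})$ the integration interval may be translated back to $[0,y_0]$ without changing the value, the vertical translation $y_1$ being washed out by periodicity. In every case one obtains $\Pi_{y_0}(f)(v+\alpha x)=\Pi_{y_0}(f)(x)$, that is, invariance under $(v,\alpha)\in E(n)$.

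For necessity I would expand a generic $f\in\mathcal{X}_{\Gamma}$ as a Fourier series $\sum_{K\in\mathcal{L}^{\ast}}\hat{f}(K)\,e^{2\pi i\langle K,(x,y)\rangle}$ over the dual lattice $\mathcal{L}^{\ast}$, where $\Gamma$-invariance forces $\hat{f}(\beta K)=\hat{f}(K)\,e^{-2\pi i\langle\beta K,(w,c)\rangle}$ for each $((w,c),\beta)\in\Gamma$. Projection then reads $\Pi_{y_0}(f)(x)=\sum_{\kappa}g(\kappa)\,e^{2\pi i\langle\kappa,x\rangle}$, with $g(\kappa)=\sum_{K_y}\hat{f}(\kappa,K_y)\,w(K_y)$ and weight $w(t)=\int_{0}^{y_0}e^{2\pi i t y}\,dy$, and invariance of $\Pi_{y_0}(f)$ under $(v,\alpha)$ becomes the requirement $g(\alpha\kappa)\,e^{2\pi i\langle\alpha\kappa,v\rangle}=g(\kappa)$ for every $\kappa$. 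The decisive features of the weight are $w(0)=y_0$, the vanishing $w(t)=0$ exactly when $t y_0\in\mathbb{Z}\setminus\{0\}$, and the reflection identity $w(-t)\,e^{2\pi i t y_0}=w(t)$ (again a consequence of $y\mapsto y_0-y$). When $(0,y_0)\in\mathcal{L}$ every dual vector has $K_y\in\tfrac{1}{y_0}\mathbb{Z}$, so only the horizontal modes with $K_y=0$ survive the projection, which leads to condition III; otherwise one is forced to compare the vertical fibres of $\mathcal{L}^{\ast}$ lying over $\kappa$ and over $\alpha\kappa$.

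Since $g(\alpha\kappa)\,e^{2\pi i\langle\alpha\kappa,v\rangle}=g(\kappa)$ must hold for \emph{every} admissible coefficient array $\hat{f}$, the principal obstacle is to rule out accidental cancellation and show the identity is forced orbit by orbit: there must be a point-group element carrying the fibre over $\kappa$ bijectively onto the fibre over $\alpha\kappa$ with matching weights and phases. Such an element acts as $\alpha$ on the horizontal part, hence equals $\alpha_{+}$ or $\alpha_{-}$; the reflection identity for $w$ together with the phase $e^{2\pi i\langle\alpha\kappa,v\rangle}$ then pins the accompanying vertical translation to $c=0$ in the $\alpha_{+}$ case (condition I) and to $c=y_0$ in the $\alpha_{-}$ case (condition II). Making this ``forced orbit by orbit'' step rigorous --- choosing test coefficients supported on a single point-group orbit so as to isolate one pairing at a time, and verifying that the weights $w(K_y)$ do not vanish on the relevant fibres --- is where the real work lies; the remainder is bookkeeping with the semidirect-product phases.
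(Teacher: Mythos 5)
This statement is quoted by the paper from Pinho and Labouriau \cite{pinlab14} (their Theorem 1.2) and is never proved in the present paper, so there is no internal proof to compare against; your attempt must be judged on its own merits. Your sufficiency direction is correct and essentially complete: the three computations (direct substitution for I; the change of variables \(y \mapsto y_{0}-y\), which preserves the band, for II; and a vertical shift absorbed by the \(y_{0}\)-periodicity granted by \((0,y_{0})\in\mathcal{L}\) for III) are exactly what is needed, and they work for any integrable invariant \(f\) with no Fourier hypothesis.

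The necessity direction, however, contains a genuine gap, and you have located it yourself: the ``forced orbit by orbit'' step is not residual bookkeeping, it is the entire content of the theorem. What must be produced, when none of I, II, III holds, is a single \(\Gamma\)-invariant test function whose projection fails to be \((v,\alpha)\)-invariant, and the obstacles to this are concrete. First, the coefficients \(\hat f(K)\) are not free: they are tied together along each point-group orbit by the phase relations you wrote and by the reality constraint \(\hat f(-K)=\overline{\hat f(K)}\), so a test function supported on one orbit has only one free complex parameter; worse, for non-symmorphic \(\Gamma\) the stabiliser of \(K\) can force that parameter to be zero (systematic extinctions), so not every orbit even carries a nonzero invariant function, and you must argue that enough orbits survive to detect the failure. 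Second, the weight \(w\) vanishes at every \(K_{y}\) with \(K_{y}y_{0}\in\mathbb{Z}\setminus\{0\}\), and this happens for many dual vectors even when \((0,y_{0})\notin\mathcal{L}\) (e.g.\ \(\mathcal{L}=\mathbb{Z}^{n+1}\), \(y_{0}=1/2\)); modes killed by the projection carry no information, so a fibre-to-fibre mismatch can be invisible, and you must show the surviving modes suffice --- this is precisely where the hypothesis that invariance holds for \emph{all} \(f\) has to be exploited, and your sketch does not say how. Third, even granting a mode-wise pairing for every wave vector, each wave vector a priori yields its own witnessing group element with its own translational part; converting these infinitely many local phase conditions into membership in \(\Gamma\) of one element of the exact form \(((v,0),\alpha_{+})\), \(((v,y_{0}),\alpha_{-})\), or \(((v,y_{1}),\alpha_{\pm})\) together with \((0,y_{0})\in\mathcal{L}\) is a separate lattice-theoretic argument that is absent. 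As it stands you have proved one implication and stated a plausible programme, not a proof, for the other.
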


	\section{Hexagonal Projected Symmetries}

	As we saw in the last section, there is a connection between a crystallographic group \(\Gamma\) in dimension \(n+1\) and the group of symmetries of the set of projected functions \(\Pi_{y_{0}}(\mathcal{X}_{\Gamma})\). In this work we aim to know which crystallographic groups in dimension 3 can yield hexagonal symmetries after projection. In other words, we want to describe  how to obtain hexagonal plane patterns by projection.   
	
		
	Given a crystallographic group \(\Gamma\), with an \( (n+1)\)-dimensional lattice  \( \mathcal{L}\) , whose holohedry is $H_{\mathcal{L}}$, we denote by \(\Pi_{y_{0}}(\mathcal{L})\) the translation subgroup of the crystallographic group \(\Pi_{y_{0}}(\Gamma)\) of symmetries of \(\Pi_{y_{0}}(\mathcal{X}_{\Gamma})\), whose point group is a subset of the holohedry of $\Pi_{y_{0}}(\mathcal{L})$. From theorem~\ref{thPinho} we obtain
	
\begin{corollary}\label{corolHolohedry1}
	Let \(\widetilde{\Gamma}\) be a crystallographic group with lattice \tilL \ $\subset \mathbb{R}^{n}$. Suppose  \tilL \ $=\Pi_{y_{0}}(\mathcal{L})$, and let $H_{\widetilde{\mathcal{L}}}$ and $H_{\mathcal{L}}$ be the holohedries of \tilL \ and $\mathcal{L}\subset \mathbb{R}^{n+1}$, respectively.  If  $\alpha \in H_{\widetilde{\mathcal{L}}}$ lies in the point group of  \(\widetilde{\Gamma}\) then either $\alpha_{+} \in H_{\mathcal{L}}$ or $\alpha_{-} \in H_{\mathcal{L}}$.
\end{corollary}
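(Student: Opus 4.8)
The plan is to reduce the statement to a direct application of Theorem~\ref{thPinho}, combined with the elementary fact that the point group of a crystallographic group is always contained in the holohedry of its lattice. Indeed, as recalled earlier in the paper, the action of a point group leaves the translation lattice invariant, and the holohedry is by definition the \emph{largest} subgroup of $O(n+1)$ with this property; hence every element of the point group of $\Gamma$ belongs to $H_{\mathcal{L}}$. This containment is the engine of the whole argument.

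First I would translate the hypothesis into the language of Theorem~\ref{thPinho}. Saying that $\alpha$ lies in the point group of $\widetilde{\Gamma}=\Pi_{y_{0}}(\Gamma)$ means there is some $v\in\mathbb{R}^{n}$ with $(v,\alpha)\in\widetilde{\Gamma}$; since $\widetilde{\Gamma}$ is precisely the symmetry group of the set $\Pi_{y_{0}}(\mathcal{X}_{\Gamma})$, the pair $(v,\alpha)$ leaves every projected function invariant. This is exactly the left-hand side of the equivalence in Theorem~\ref{thPinho}, so I would make the identification $\widetilde{\mathcal{L}}=\Pi_{y_{0}}(\mathcal{L})$ explicit and then quote the theorem to conclude that at least one of the conditions I, II, III holds.

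The key observation is that in each of these three cases the group $\Gamma$ contains an element whose orthogonal part is $\alpha_{+}$ or $\alpha_{-}$: condition I yields $((v,0),\alpha_{+})\in\Gamma$, condition II yields $((v,y_{0}),\alpha_{-})\in\Gamma$, and condition III yields $((v,y_{1}),\alpha_{+})\in\Gamma$ or $((v,y_{1}),\alpha_{-})\in\Gamma$ for some $y_{1}$. Applying the point-group homomorphism $\phi$ to the relevant element then places $\alpha_{+}$ or $\alpha_{-}$ in the point group of $\Gamma$, and therefore, by the containment noted in the first paragraph, in $H_{\mathcal{L}}$. Each branch of the case analysis thus terminates immediately in one of the two alternatives $\alpha_{+}\in H_{\mathcal{L}}$ or $\alpha_{-}\in H_{\mathcal{L}}$.

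Since there is no computation to carry out beyond this case check, the only point deserving care is the first step: one must be certain that membership of $\alpha$ in the point group of $\widetilde{\Gamma}$ genuinely furnishes a symmetry of \emph{all} functions in $\Pi_{y_{0}}(\mathcal{X}_{\Gamma})$, so that Theorem~\ref{thPinho} is applicable. This is exactly what the hypothesis $\widetilde{\mathcal{L}}=\Pi_{y_{0}}(\mathcal{L})$, together with the definition of $\widetilde{\Gamma}$ as the symmetry group of the projected functions, is there to guarantee; I would therefore state this identification carefully before invoking the theorem, and the rest follows formally.
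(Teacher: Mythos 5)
Your proposal is correct and follows essentially the same route as the paper's own proof: translate the hypothesis into $(v,\alpha)$-invariance of all functions in $\Pi_{y_{0}}(\mathcal{X}_{\Gamma})$, invoke Theorem~\ref{thPinho} to obtain one of conditions I--III, extract an element of $\Gamma$ with orthogonal part $\alpha_{+}$ or $\alpha_{-}$, and conclude via the containment of the point group in the holohedry. Your version merely spells out more explicitly (via the homomorphism $\phi$ and the lattice-invariance of the point group) what the paper compresses into the phrase ``by definition of holohedry.''
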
	

\begin{proof}
	Since \(\alpha \in H_{\widetilde{\mathcal{L}}}\) implies \(\alpha\) lies in the point group of \(\Gamma\), then there exists  \(v \in \mathbb{R}^{n}\) such that \(f\) is \((v,\alpha)\)-invariant for all \(f \in \Pi_{y_{0}}(\mathcal{X}_{\Gamma})\). Hence, one of the three conditions of theorem~\ref{thPinho}  holds. Then, depending on whether (I), (II) or (III) is verified, either \((w,\alpha_{+})\) or \((w,\alpha_{-})\) is in \(\Gamma\) where, \(w \in \lbrace (v,0), \ (v,y_{0}), \ (v,y_{1})\rbrace\). By definition of holohedry, we have either  \(\alpha_{+} \in H_{\mathcal{L}}$ or $\alpha_{-} \in H_{\mathcal{L}}\).
%
\end{proof}		

\begin{rem}\label{remProjLattice}
	\emph{We note that there is a non-trivial relation between the lattice \(\widetilde{\mathcal{L}}\) of periods of the projected functions and that of the original one. In fact, consider an \((n+1)\)-dimensional lattice  \( \mathcal{L}\) \ and \(\widetilde{\mathcal{L}} = \Pi_{y_{0}}(\mathcal{L})\). If \(v \in \widetilde{\mathcal{L}}\) then \((v,I_{n})\) is a symmetry of \(\Pi_{y_{0}}(\mathcal{X}_{\Gamma})\). Applying theorem~\ref{thPinho} with \(\alpha = I_{n}\), one of the following holds for each \(v \in \widetilde{\mathcal{L}}\):
\begin{itemize}
\item[I]\(((v,0),\alpha_{+}) = ((v,0),I_{n+1})\in \Gamma\), or equivalently  \((v,0) \in \mathcal{L}\);
\item[II]\(((v,y_{0}),\alpha_{-}) = ((v,y_{0}),\sigma) \in \Gamma\) then \(((v,y_{0}),\sigma)^{2}\in \Gamma\) implying that \((2v,0)\in \mathcal{L}\);
\item[III]\((0,y_{0})\in \mathcal{L}$ and either \((v,y_{1})\) or \((2v,0)\) is in  \( \mathcal{L}\) , for some $y_{1}\in\mathbb{R}$.
\end{itemize}}

	\emph{While condition I implies that \(\mathcal{L}\cap\lbrace (x,0)\in \mathbb{R}^{n+1}\rbrace\subseteq\widetilde{\mathcal{L}}\), the other conditions show that this inclusion is often strict. Furthermore, conditions II and III show that we may have no element of the form \((v,y_{1})\) in  \( \mathcal{L}\) \ and yet \(v\in\widetilde{\mathcal{L}}\). This is due to a possible non-zero 
	translation
vector associated to \(\sigma \in J\). }
\end{rem}

	As a converse to corollary~\ref{corolHolohedry1} we have:

\begin{corollary}\label{corolHolohedry2}
Let $\Gamma$ be a crystallographic group with an $(n+1)$-dimensional lattice  \( \mathcal{L}\){} and let  \tilL \ $=\Pi_{y_{0}}(\mathcal{L})$, with $H_{\widetilde{\mathcal{L}}}$ and $H_{\mathcal{L}}$  the holohedries of 
$\widetilde{\mathcal{L}}\subset \mathbb{R}^{n}$
and of $\mathcal{L}\subset \mathbb{R}^{n+1}$, respectively. Suppose either $\alpha_{+} $ or $ \alpha_{-}$ is in $H_{\mathcal{L}}$. If one of the following conditions holds
\begin{itemize}
\item[1.] $\sigma \notin H_{\mathcal{L}}$;
\item[2.] either $(0, \alpha_{+}) $ or $(0, \alpha_{-})$  is in $\Gamma$;
\end{itemize}	
then, for any $y_{0} \in \mathbb{R}$, $\alpha \in H_{\widetilde{\mathcal{L}}}$.
\end{corollary}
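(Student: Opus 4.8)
The plan is to prove the conclusion in its primitive form, namely that $\alpha$ leaves the projected lattice invariant, $\alpha(\widetilde{\mathcal{L}})=\widetilde{\mathcal{L}}$. Since $\alpha\in O(n)$ preserves volume, it is enough to establish the single inclusion $\alpha(\widetilde{\mathcal{L}})\subseteq\widetilde{\mathcal{L}}$: the image is then a full-rank sublattice of the same covolume, forcing equality. So I would fix an arbitrary $v\in\widetilde{\mathcal{L}}$ and work towards $\alpha v\in\widetilde{\mathcal{L}}$.

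To certify membership in $\widetilde{\mathcal{L}}$ I would use the description extracted in Remark~\ref{remProjLattice} (that is, Theorem~\ref{thPinho} applied with $\alpha=I_n$): $v\in\widetilde{\mathcal{L}}$ holds exactly when one of (I) $(v,0)\in\mathcal{L}$, (II) $((v,y_0),\sigma)\in\Gamma$, or (III) $(0,y_0)\in\mathcal{L}$ together with $(v,y_1)\in\mathcal{L}$ or $((v,y_1),\sigma)\in\Gamma$ for some $y_1$, is satisfied. The idea is to carry whichever certificate holds for $v$ over to a certificate for $\alpha v$, the only available tool being the standing hypothesis that $\alpha_+\in H_{\mathcal{L}}$ or $\alpha_-\in H_{\mathcal{L}}$, i.e. that $\alpha_+$ or $\alpha_-$ maps $\mathcal{L}$ to itself.

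The side-preserving certificates transfer at once. If $(v,0)\in\mathcal{L}$ then $\alpha_{\pm}(v,0)=(\alpha v,0)\in\mathcal{L}$, because the vanishing last coordinate makes the sign irrelevant; this is (I) for $\alpha v$. Likewise, in (III) the relation $(v,y_1)\in\mathcal{L}$ yields $\alpha_{\pm}(v,y_1)=(\alpha v,\pm y_1)\in\mathcal{L}$ while $(0,y_0)\in\mathcal{L}$ is left untouched, so (III) holds for $\alpha v$. All the difficulty is thus concentrated in the side-reversing certificate (II), where the factor $\sigma$ appears and the sign of the last coordinate can no longer be ignored. This is exactly where the two alternative hypotheses enter, in complementary ways. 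Under condition 1 the case (II), and the $\sigma$-branch of (III), simply cannot occur: an element $((v,y_0),\sigma)\in\Gamma$ would place $\sigma$ in the point group of $\Gamma$, hence in $H_{\mathcal{L}}$, contradicting $\sigma\notin H_{\mathcal{L}}$, so only the cases already disposed of remain. Under condition 2, say $((0,0),\alpha_\eta)\in\Gamma$ with $\eta\in\{+,-\}$, I would transport (II) by conjugation, using that $\alpha_\eta$ commutes with $\sigma$: a direct computation gives $((0,0),\alpha_\eta)\,((v,y_0),\sigma)\,((0,0),\alpha_\eta)^{-1}=((\alpha v,\eta y_0),\sigma)\in\Gamma$.

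The step I expect to be the real obstacle is controlling the sign $\eta y_0$ produced by this conjugation. When $\eta=+$ the output $((\alpha v,y_0),\sigma)\in\Gamma$ is precisely certificate (II) for $\alpha v$ and nothing more is needed. When $\eta=-$ one obtains instead $((\alpha v,-y_0),\sigma)\in\Gamma$, which is not literally one of the admissible forms, and the argument must be closed by other means: I would exploit the crystallographic consistency of $\Gamma$---the side-reversing isometries form a single coset of $\mathcal{L}$, so conjugation by $\alpha_\eta$ forces $(\alpha v-v,\pm 2y_0)\in\mathcal{L}$---and then either fall back on (III) when $(0,y_0)\in\mathcal{L}$, or use the precise glide vector of the $\sigma$-element to recover (II). Pinning down this last coordinate, and checking that the resulting translation genuinely lies in $\mathcal{L}$ and not merely in the dilated copy $\tfrac12\mathcal{L}$, is the delicate point on which the whole corollary turns.
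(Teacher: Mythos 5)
Your reduction to $\alpha\widetilde{\mathcal{L}}\subseteq\widetilde{\mathcal{L}}$, the transfer of the side-preserving certificates (I) and (III) via $\alpha_{\pm}\in H_{\mathcal{L}}$, and the exclusion of $\sigma$-type certificates under condition 1 all coincide with the paper's proof; your conjugation computation is also correct, and for $\eta=+$ it gives $((\alpha v,y_{0}),\sigma)\in\Gamma$, i.e.\ certificate (II) for $\alpha v$, which is if anything cleaner than the paper's route (the paper forms $((v,y_{0}),\sigma)\cdot(0,\alpha_{-})=((v,y_{0}),\alpha_{+})$, and in the $\alpha_{+}$ case argues via $(0,\alpha)\in\Pi_{y_{0}}(\Gamma)$ together with the fact that the point group of a crystallographic group preserves its translation lattice). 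But the case you flag as ``the real obstacle'' --- $\eta=-$, certificate (II) only, $(0,y_{0})\notin\mathcal{L}$ --- is a genuine gap, and none of the repairs you sketch can close it. Everything extractable from $g=((v,y_{0}),\sigma)$, $a=(0,\alpha_{-})$ and $\mathcal{L}$ amounts to $(2v,0)$, $(2\alpha v,0)$, $(v+\alpha v,2y_{0})$, $(v-\alpha v,2y_{0})\in\mathcal{L}$, and modulo $\widetilde{\mathcal{L}}$ these certify only $2\alpha v\in\widetilde{\mathcal{L}}$ --- exactly the dilated-copy trap you anticipated. Moreover the paper's own proof breaks at the same spot: from $((v,y_{0}),\alpha_{+})\in\Gamma$ it concludes $(v,\alpha)\in\widetilde{\Gamma}$ ``by theorem~\ref{thPinho}'', but when $(0,y_{0})\notin\mathcal{L}$ that element matches none of I--III (I needs last coordinate $0$, II needs orthogonal part $\alpha_{-}$, III needs $(0,y_{0})\in\mathcal{L}$).

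In fact the residual case is not merely delicate: the corollary fails there, so no bookkeeping of the glide vector can rescue it. Take $n=2$, $\mathcal{L}=\lbrace\frac{1}{2}(i,j,k):\ i,j,k\in\mathbb{Z},\ i+j+k \ \mbox{even}\rbrace$ (face-centred), $\alpha=\mathrm{diag}(1,-1)$, $t=(\frac{1}{4},\frac{1}{4},\frac{1}{4})$, and $\Gamma=(\mathcal{L},I_{3})\cup(\mathcal{L},\alpha_{-})\cup(t+\mathcal{L},\sigma)\cup(t+\mathcal{L},\alpha_{+})$; closure holds because $g^{2}=((\frac{1}{2},\frac{1}{2},0),I_{3})$, $(ga)^{2}=((\frac{1}{2},0,\frac{1}{2}),I_{3})$ and $aga^{-1}g^{-1}=((0,-\frac{1}{2},-\frac{1}{2}),I_{3})$ are all translations in $\mathcal{L}$, so $\Gamma$ is a crystallographic group with lattice exactly $\mathcal{L}$ (an $Fdd$-type group with diamond glide $g=((\frac{1}{4},\frac{1}{4},\frac{1}{4}),\sigma)$). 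Here $\alpha_{+},\alpha_{-}\in H_{\mathcal{L}}$ and $(0,\alpha_{-})\in\Gamma$, so the hypotheses and condition 2 of corollary~\ref{corolHolohedry2} hold. For $y_{0}=\frac{1}{4}$ one has $(0,y_{0})\notin\mathcal{L}$, and theorem~\ref{thPinho} yields $\widetilde{\mathcal{L}}=\Lambda_{0}\cup\bigl((\frac{1}{4},\frac{1}{4})+\Lambda_{0}\bigr)$ with $\Lambda_{0}=\lbrace w:(w,0)\in\mathcal{L}\rbrace=\langle(\frac{1}{2},\frac{1}{2}),(\frac{1}{2},-\frac{1}{2})\rangle_{\mathbb{Z}}$; but $\alpha(\frac{1}{4},\frac{1}{4})=(\frac{1}{4},-\frac{1}{4})\notin\widetilde{\mathcal{L}}$, since neither $(\frac{1}{4},-\frac{1}{4})$ nor $(0,-\frac{1}{2})$ lies in $\Lambda_{0}$. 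Hence $\alpha\notin H_{\widetilde{\mathcal{L}}}$, contradicting the stated conclusion. The statement does survive under strengthened hypotheses --- e.g.\ requiring $(0,\alpha_{+})\in\Gamma$ in condition 2, or assuming additionally $(0,2y_{0})\in\mathcal{L}$, in which case $(v-\alpha v,2y_{0})\in\mathcal{L}$ gives $(v-\alpha v,0)\in\mathcal{L}$ and $((v,y_{0}),\sigma)\cdot((\alpha v-v,0),I_{3})=((\alpha v,y_{0}),\sigma)$ restores certificate (II) exactly along the lines you proposed. So your hesitation located the true weak point: what is missing there is not a computation but a hypothesis.
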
	

\begin{proof}
Consider $v \in \widetilde{\mathcal{L}}$ and suppose condition 1 holds. By theorem~\ref{thPinho}, either $(v,0) \in \mathcal{L}$ or  $(0,y_{0})$ and  $(v,y_{1}) \in \mathcal{L}$.
	
If $(v,0) \in \mathcal{L}$ then $(\alpha v,0) \in \mathcal{L}$. Otherwise, $(0,y_{0})$ and  $(\alpha v,y_{2}) \in \mathcal{L}$  for $y_{2} \in \lbrace -y_{1}, \ y_{1}\rbrace$ . Applying theorem~\ref{thPinho} we have $\alpha v \in \widetilde{\mathcal{L}}$ in both cases. Therefore, $\alpha$ is a symmetry of $\widetilde{\mathcal{L}}$.
	
	Suppose now that condition  2 holds. If  $(0, \alpha_{+}) \in \Gamma$ then $(0, \alpha)$ belongs to  \(\widetilde{\Gamma}\), for all $y_{0} \in \mathbb{R}$, 
by condition I of theorem~\ref{thPinho}.
The other possibility is that $(0, \alpha_{-}) \in \Gamma$. 
 If for $v \in \widetilde{\mathcal{L}}$ either condition I or condition III of theorem~\ref{thPinho} holds,  the proof follows as in the case of condition 1. Suppose then that $((v,y_{0}), \sigma) \in \Gamma$, then $ ((v,y_{0}), \sigma)\cdot(0, \alpha_{-}) = ((v,y_{0}), \alpha_{+}) \in \Gamma$.  Therefore, $(v, \alpha) \in \widetilde{\Gamma}$, 
  by theorem~\ref{thPinho},
  completing the proof. 
%
\end{proof}		

	The analysis in \cite{int-tab-e} chapter 5.1 aims to find \emph{sectional layer groups} and \emph{penetration rod groups}, subgroups of the crystallographic group that leave a \emph{crystallographic plane}, defined by three lattice points, and a \emph{crystallographic straight line} invariant, respectively, by a method of scanning a given crystallographic group.
	
	When a pattern is projected, it is not immediate that the plane of projection is crystallographic, as it may not contain three lattice points.

We say that lattice $ \mathcal{L}_1$ is \emph{rationally compatible} with a lattice $ \mathcal{L}_2$ if there exists \(r \in \mathbb{Z} \setminus\lbrace 0 \rbrace \) such that 
$r \mathcal{L}_1\subset \mathcal{L}_2$.
A vector $v\in\mathbb{R}^{k}$ is \emph{rational with respect to a lattice} 
$ \mathcal{L}\subset\mathbb{R}^{k}$ if 
$\langle v,\ell\rangle\in\mathbb{Q}$ for all $\ell\in \mathcal{L}$,
where \(\langle\cdot,\cdot\rangle\) is the usual inner product in \(\mathbb{R}^{k}\). 
Given a lattice \(\widetilde{\mathcal{L}}\subset\mathbb{R}^{k}\), we define its \emph{suspension}
 \(\widetilde{\mathcal{L}}_s\subset \mathbb{R}^{k+1}\) as 
  \(\widetilde{\mathcal{L}}_s=\{(v,0); \ v \in \widetilde{\mathcal{L}}\}\).

The next proposition provides  conditions  for a suspension of the projected lattice to be rationally compatible with the original lattice.

\begin{proposition}\label{propRational}
	Consider a crystallographic group \(\Gamma\) with a lattice \(\mathcal{L} \subset \mathbb{R}^{n+1}\) and let \(\widetilde{\mathcal{L}} = \Pi_{y_{0}}(\mathcal{L}) \subset \mathbb{R}^{n} \) be the translation subgroup of \(\Pi_{y_{0}}({\Gamma})\) and denote its suspension by
  \(\widetilde{\mathcal{L}}_s\subset \mathbb{R}^{n+1}\). \\
 If  \((0,y_{0})\not\in \mathcal{L}\), then  \(\widetilde{\mathcal{L}}_s \) is always rationally compatible with  \(\mathcal{L}\).\\
  If \((0,y_{0})\in \mathcal{L}\), then  \(\widetilde{\mathcal{L}}_s\) is rationally compatible with \(\mathcal{L}\) if and only if
 the normal vector  \((0,y_{0})\)  to the projection hyperplane is rational with respect to \( \mathcal{L}\).
\end{proposition}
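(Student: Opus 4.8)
The plan is to translate the statement, through Remark~\ref{remProjLattice}, into a question about when some nonzero integer multiple of a generator of $\widetilde{\mathcal{L}}$ lifts horizontally into $\mathcal{L}$. I would fix a basis $v_1,\dots,v_n$ of $\widetilde{\mathcal{L}}$. Since $r\widetilde{\mathcal{L}}_s\subset\mathcal{L}$ is equivalent to $(rv_i,0)\in\mathcal{L}$ for each $i$, and a common $r$ is obtained from individual multipliers by taking a common multiple, it is enough to find, for each generator $v$, a nonzero integer $q$ with $(qv,0)\in\mathcal{L}$.

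First I would dispose of the case $(0,y_0)\notin\mathcal{L}$. Here condition III of Remark~\ref{remProjLattice} cannot occur, so every $v\in\widetilde{\mathcal{L}}$ satisfies condition I or II, and both give $(2v,0)\in\mathcal{L}$. Thus $2\widetilde{\mathcal{L}}_s\subset\mathcal{L}$, and $r=2$ witnesses rational compatibility for every $y_0$, as claimed. Assume now that $(0,y_0)\in\mathcal{L}$ and prove the \emph{if} direction. For a generator $v$, Remark~\ref{remProjLattice} yields either $(2v,0)\in\mathcal{L}$, which is already horizontal, or $(v,y_1)\in\mathcal{L}$ for some $y_1\in\mathbb{R}$. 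In the second situation I would use that $(0,y_0)$ is rational with respect to $\mathcal{L}$: pairing $(0,y_0)$ with $(0,y_0)\in\mathcal{L}$ gives $y_0^2\in\mathbb{Q}$, and pairing it with $(v,y_1)\in\mathcal{L}$ gives $y_0y_1\in\mathbb{Q}$, so that $y_1/y_0=(y_0y_1)/y_0^2\in\mathbb{Q}$. Writing $y_1/y_0=p/q$ with $q\in\mathbb{Z}\setminus\{0\}$, we obtain $q(v,y_1)-p(0,y_0)=(qv,0)\in\mathcal{L}$. Collecting the $q$'s over the finitely many generators produces the required $r$.

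For the \emph{only if} direction I would argue at the level of $y$-components. Let $(0,c)$ generate the rank-one lattice $\{(0,t)\in\mathcal{L}\}$, so $y_0\in c\mathbb{Z}$. Because $(0,y_0)\in\mathcal{L}$, condition III of Remark~\ref{remProjLattice} places every $w$ with $(w,t)\in\mathcal{L}$ inside $\widetilde{\mathcal{L}}$; if $r\widetilde{\mathcal{L}}_s\subset\mathcal{L}$ then $(rw,0)\in\mathcal{L}$, and subtracting this from $r(w,t)$ gives $(0,rt)\in\mathcal{L}$, hence $rt\in c\mathbb{Z}$ and $t/c\in\mathbb{Q}$ for every $(w,t)\in\mathcal{L}$.

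The hard part will be converting this commensurability of the $y$-components into the inner-product rationality $\langle(0,y_0),(w,t)\rangle=y_0t\in\mathbb{Q}$. Indeed, $y_0t=y_0^2\,(t/c)(c/y_0)$ with $(t/c)(c/y_0)\in\mathbb{Q}$, so the entire argument hinges on the value of $y_0^2$, which is exactly the scalar recorded by the self-pairing $\langle(0,y_0),(0,y_0)\rangle$. I therefore expect the delicate step to be controlling $y_0^2$: commensurability of the $y$-components alone does not pin it down, and it is precisely the piece of data that the normal vector $(0,y_0)$ must contribute through the definition of rationality with respect to $\mathcal{L}$. Once $y_0^2\in\mathbb{Q}$ is available the implication closes immediately, so I expect this to be the main obstacle of the proof.
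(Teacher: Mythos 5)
Your proposal tracks the paper's own proof almost line by line: the case \((0,y_0)\notin\mathcal{L}\) via conditions I/II of Remark~\ref{remProjLattice} with \(r=2\); the \emph{if} direction via the trichotomy (i)--(iii) and the elimination \(q(v,y_1)-p(0,y_0)=(qv,0)\in\mathcal{L}\); and, in the \emph{only if} direction, the derivation \((0,ry_1)\in\mathcal{L}\), hence \(y_1/y_0\in\mathbb{Q}\) for every \((v,y_1)\in\mathcal{L}\) --- your reduction through the cyclic group \(\{(0,t)\in\mathcal{L}\}=c\,\mathbb{Z}\) is the same argument with the discreteness made explicit. Your handling of the \emph{if} direction, pairing \((0,y_0)\) with itself to get \(y_0^2\in\mathbb{Q}\) and with \((v,y_1)\) to get \(y_0y_1\in\mathbb{Q}\), hence \(y_1/y_0\in\mathbb{Q}\), is correct and in fact more careful than the paper, which asserts condition (iii) ``because \((0,y_0)\) is rational with respect to \(\mathcal{L}\)'' without the computation.

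The step at which you stop is the only point where the two texts diverge, and you are right to stop there. The paper's \emph{only if} argument ends with ``It follows that \(y_1=\frac{p}{q}y_0\) \dots\ Therefore \((0,y_0)\) is rational with respect to \(\mathcal{L}\)'', i.e.\ it silently performs exactly the upgrade from commensurability to inner-product rationality that you flag as requiring \(y_0^2\in\mathbb{Q}\). That upgrade is not available: take \(\Gamma=\mathcal{L}=\langle(1,0),(0,\pi)\rangle_{\mathbb{Z}}\) and \(y_0=\pi\); then \(\widetilde{\mathcal{L}}_s=\langle(1,0)\rangle_{\mathbb{Z}}\subset\mathcal{L}\) is rationally compatible with \(r=1\), yet \(\langle(0,y_0),(0,y_0)\rangle=\pi^2\notin\mathbb{Q}\), so \((0,y_0)\) is not rational with respect to \(\mathcal{L}\) under the paper's stated definition. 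So the ``main obstacle'' you identify is a genuine flaw in the proposition as literally stated, not a missing idea on your part: what rational compatibility is actually equivalent to is the commensurability condition \(y_1\in\mathbb{Q}\,y_0\) for all \((v,y_1)\in\mathcal{L}\) (which is what both you and the paper in fact prove), and this coincides with the stated inner-product rationality only when \(y_0^2\in\mathbb{Q}\) --- as happens in both of the paper's illustrating examples, where \(y_0\in\mathbb{Z}\). Since the rationality hypothesis is strictly stronger than commensurability, your \emph{if} direction is complete, while the \emph{only if} direction holds only under the commensurability reading of ``rational'' (or with the extra hypothesis \(y_0^2\in\mathbb{Q}\)); you should record the counterexample and restate the equivalence accordingly.
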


Note that if  \((0,y_{0})\in \mathcal{L}\), we are projecting the values of functions on a band of the width of one (or more) cells along a crystallographic direction. Otherwise the projected group is smaller.
So, we must use different results in the ITC according to the case.

\begin{proof}	
  If  \((0,y_{0})\not\in \mathcal{L}\), then only conditions I or II of remark~\ref{remProjLattice} are applicable.
Therefore,  if  \( v\in \widetilde{\mathcal{L}}\) then \( (2v,0)\in\mathcal{L}\). 
Hence,  \(\widetilde{\mathcal{L}} \)   is  rationally compatible with  \(\mathcal{L}\).

 If \((0,y_{0})\in \mathcal{L}\), then, using remark~\ref{remProjLattice}, it follows that  \( v\in \widetilde{\mathcal{L}}\) for all \( v\) such that  \((v,y_{1}) \in \mathcal{L}\) for some \(y_{1}\in\mathbb{R}\). 
 
Suppose first that $  \widetilde{\mathcal{L}}_s$ is rationally compatible with $\mathcal{L}$ and 
let \( (v,y_{1}) \in \mathcal{L}\) .
 Then \( r(v,0)\in\mathcal{L}\) and  hence  \((0,ry_1) \in \mathcal{L}\) for some \(r\in\mathbb{Z}\).
It follows that \(y_1=\frac{p}{q}y_{0}\) for some \(p, \ q\) nonzero integers. Therefore   \((0,y_{0})\)  is rational with respect to \( \mathcal{L}\).

  Now suppose  \((0,y_{0})\)  is rational with respect to \( \mathcal{L}\).
We claim that for each \(\widetilde{l_{{j}}}\) one of the following conditions holds:
\begin{itemize}
	\item[(i)] \(((\widetilde{l_{j}},0),I_{n+1}) \in \Gamma\);
	\item[(ii)]\(((\widetilde{l_{j}},y_{1}),\sigma)\in \Gamma\), for some \(y_{1}\in\mathbb{R}\); 
	\item[(iii)]\((0,y_{0})\), and \((\widetilde{l_{j}},\frac{p}{q}y_{0}) \in \mathcal{L}\), for some \(p, \ q\) nonzero integers. 
\end{itemize}
Condition \( (iii)\)  is a stronger version of condition III in  remark~\ref{remProjLattice}. The other conditions follow from  remark~\ref{remProjLattice}.
Any generator \(\widetilde{l_{{j}}}\) of  \(\widetilde{\mathcal{L}} \)  such that 
 \( (\widetilde{l_{{j}}}, y) \in \mathcal{L}\) must satisfy either \( (i)\) or \( (iii) \) above,
 because \((0,y_{0})\)  is rational with respect to \( \mathcal{L}\).
 Any other generator of  \(\widetilde{\mathcal{L}} \)  must satisfy  \( (ii)\), proving our claim.
 
 Conversely, if one of the conditions \( (i) \) or \( (ii) \) is true,  then  \( (2v,0)\in\mathcal{L}\), using remark~\ref{remProjLattice}.
 If condition \( (iii)\) holds for some \(j \in \lbrace1,\cdots, n\rbrace\), then \((0,y_{0}), \ (\widetilde{l_{j}},\frac{p_{j}}{q_{j}}y_{0}) \in \mathcal{L}\), where \(\ p_{j}, \ q_{j}\) are nonzero integers. Since  \(\mathcal{L}\) \ is a lattice \((q_{j}\widetilde{l_{j}},0) \in \mathcal{L}\).
 Therefore   \(\widetilde{\mathcal{L}}_s\) is rationally compatible with \(\mathcal{L}\).
\end{proof}
As an illustration, take  \(\Gamma=\mathcal{L}=\langle (0,1), (\sqrt{2},1/2) \rangle_\mathbb{Z}\), for which 
 \(\widetilde{\mathcal{L}}_s=\langle(\sqrt{2},0)\rangle_\mathbb{Z}\)  is always rationally compatible with \(\mathcal{L}\), independently of $y_0$.
 Another example is given by \(\Gamma=\mathcal{L}=\langle (0,1), (1,\sqrt{2}) \rangle_\mathbb{Z}\). For \(y_0=1\) we have that  \(\widetilde{\mathcal{L}}_s=\langle(1,0)\rangle_\mathbb{Z}\) is  not rationally compatible with \(\mathcal{L}\),
 whereas  for \(y_0\notin\mathbb{Z}\), we get
  \(\widetilde{\mathcal{L}}_s=\{(0,0)\}\).

	For 3-dimensional lattices, 	
and if the generators of \(\widetilde{\mathcal{L}}\) are related by an orthogonal transformation, then we can 
remove the condition on \( (0,y_0)\) from the statement of proposition~\ref{propRational}, at the price of having some more complicated conditions.
This provides an alternative means of obtaining rational compatibility.
%
%
	
Our starting point is a specific 2-dimensional lattice \tilL \ and
 we want to characterize the 3-dimensional lattices 
 \( \mathcal{L}\) 
 that project onto this. The first step is to establish that  
 \( \mathcal{L}\) 
 must have a non-trivial intersection with the plane \(X0Y = \lbrace(x,y,0); \ x, \ y \in \mathbb{R} \rbrace\).
		
\begin{theorem}\label{thRational2}
Let \(\Gamma\) be a crystallographic group with a lattice \(\mathcal{L}\subset \mathbb{R}^{3}\)  such that its projection 
 \( \widetilde{\Gamma}=\Pi_{y_{0}}({\Gamma})\)  has
 a plane lattice  \tilL \ \(=\Pi_{y_{0}}(\mathcal{L})\) generated by two linearly independent vectors 
\begin{center}
\(\widetilde{l_{1}}\) and \(\widetilde{l_{2}} = \rho \widetilde{l_{1}}\)
\end{center}
for \(\rho\) in the point group, \(\widetilde{J} \), of  \
 \(\widetilde{\Gamma}\).

Then the suspension
 \(\widetilde{\mathcal{L}}_s\subset \mathbb{R}^{3}\)
is rationally compatible with \(\mathcal{L}\) if
%
for each \(v \in \lbrace\widetilde{l_{1}}, \ \widetilde{l_{2}}\rbrace\) one of the following conditions holds:
\begin{itemize}
	\item[a.] \(((v,0),I_{3}) \in \Gamma\);
	\item[b.] \(((v,y_{1}),\sigma)\in \Gamma\), for some \(y_{1}\in\mathbb{R}\); 
	\item[c.] \((v,y_{1})\in \mathcal{L}\), for some \(y_{1}\in\mathbb{R}\). 
	\end{itemize}
	
\end{theorem}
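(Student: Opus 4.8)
The plan is to reduce the statement to the two generators and then, for each, manufacture a horizontal lattice vector by exploiting the finite order of \(\rho\). Since \(\widetilde{\mathcal{L}}_s\) is generated over \(\mathbb{Z}\) by \((\widetilde{l_1},0)\) and \((\widetilde{l_2},0)\), rational compatibility is equivalent to finding, for each \(j\in\{1,2\}\), a nonzero integer \(r_j\) with \((r_j\widetilde{l_j},0)\in\mathcal{L}\): then \(r=r_1r_2\) satisfies \(r\widetilde{\mathcal{L}}_s\subset\mathcal{L}\), because \(\mathcal{L}\) is closed under integer scaling. So the whole argument is carried out one generator at a time.

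The two elementary cases dispose of conditions (a) and (b). If \(\widetilde{l_j}\) satisfies (a) then \((\widetilde{l_j},0)\in\mathcal{L}\) already, so \(r_j=1\). If it satisfies (b), squaring \(((\widetilde{l_j},y_1),\sigma)\) exactly as in case II of remark~\ref{remProjLattice} gives \((2\widetilde{l_j},0)\in\mathcal{L}\), so \(r_j=2\). Hence the entire difficulty concentrates in condition (c), where all I know is \((\widetilde{l_j},y_j)\in\mathcal{L}\). The unifying observation for this case is that it suffices to produce a \emph{single} nonzero vertical lattice vector \((0,Ny_j)\in\mathcal{L}\) with \(N\in\mathbb{Z}\setminus\{0\}\); for then \(N(\widetilde{l_j},y_j)-(0,Ny_j)=(N\widetilde{l_j},0)\in\mathcal{L}\), giving \(r_j=N\).

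To produce that vertical vector I would use the finite order of \(\rho\). Because \(\rho\) lies in the point group \(\widetilde{J}\), theorem~\ref{thPinho} guarantees that it is genuinely realized in \(\Gamma\), and corollary~\ref{corolHolohedry1} yields an element \(g\in\{\rho_+,\rho_-\}\cap H_{\mathcal{L}}\). Since \(H_{\mathcal{L}}\) is finite, \(\rho\) has finite order \(m\), and with \((\widetilde{l_j},y_j)\in\mathcal{L}\) every orbit point \(g^k(\widetilde{l_j},y_j)\) also lies in \(\mathcal{L}\). When \(\rho\) is a nontrivial rotation (the hexagonal case, \(m\geq 3\): the values \(m=1,2\) would force \(\widetilde{l_2}=\rho\widetilde{l_1}=\pm\widetilde{l_1}\), contradicting linear independence) the identity \(\sum_{k=0}^{m-1}\rho^k=0\) annihilates the horizontal part of the orbit sum. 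Summing \(g^k(\widetilde{l_j},y_j)\) over the appropriate range — the full period when \(g=\rho_+\); over \(m\) terms when \(g=\rho_-\) and \(m\) is odd; and via \(g^2=(\rho^2)_+\) when \(g=\rho_-\) and \(m\) is even — leaves a purely vertical vector \((0,Ny_j)\in\mathcal{L}\) with \(N\in\{m,\tfrac{m}{2},1\}\), which is exactly what is needed.

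The step I expect to be the main obstacle is the reflection case \(m=2\), where the averaging operator \(I+\rho\) no longer kills the horizontal component, so no vertical vector is produced by the orbit sum alone. Here I would combine the two generators: applying \(g\) to \((\widetilde{l_1},y_1)\) gives \((\widetilde{l_2},\pm y_1)\in\mathcal{L}\), and comparing this with the lattice element supplied by the condition (a), (b) or (c) holding for \(\widetilde{l_2}\) — together with the translation part of the element \(((w,\tau),\rho_{\pm})\in\Gamma\) that the hypothesis \(\rho\in\widetilde{J}\) furnishes through theorem~\ref{thPinho} — should yield the missing vertical vector commensurate with \(y_1\). Verifying that this commensurability always holds, rather than an irrational vertical offset, is the crux: it is precisely the strengthening from \(\rho\in H_{\widetilde{\mathcal{L}}}\) to \(\rho\in\widetilde{J}\) that rules out the incommensurate situation. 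The accompanying bookkeeping over the sign \(\epsilon\) and over the three alternatives of theorem~\ref{thPinho} is where the real content of the proof lies.
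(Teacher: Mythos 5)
Your reduction to one generator at a time, your dispatch of conditions (a) and (b) via remark~\ref{remProjLattice}, and your orbit-sum argument for condition (c) when \(\rho\) is a rotation are all correct, and that last argument is a genuinely different mechanism from the paper's. You manufacture a \emph{vertical} lattice vector \((0,Ny_{j})\in\mathcal{L}\) by summing the orbit of \((\widetilde{l_{j}},y_{j})\) under \(g\in\lbrace\rho_{+},\rho_{-}\rbrace\cap H_{\mathcal{L}}\), using \(\sum_{k=0}^{m-1}\rho^{k}=0\), and then subtract it from \(N(\widetilde{l_{j}},y_{j})\). The paper instead manufactures a \emph{horizontal} vector: from \((\widetilde{l_{1}},y_{1})\), \((\widetilde{l_{2}},y_{2})\) and \(\rho_{\pm}(\widetilde{l_{1}},y_{1})\) it obtains \((\widetilde{l_{1}}-\widetilde{l_{2}},0)\in\mathcal{L}\) or \((\widetilde{l_{1}}+\widetilde{l_{2}},0)\in\mathcal{L}\), and then uses integrality of \(\rho\) on \(\widetilde{\mathcal{L}}\), writing \(\rho(\widetilde{l_{1}}\mp\widetilde{l_{2}})=m\widetilde{l_{1}}+n\widetilde{l_{2}}\), to isolate an integer multiple of \((\widetilde{l_{1}},0)\). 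A minor economy you missed: once \(r(\widetilde{l_{1}},0)\in\mathcal{L}\) for some nonzero integer \(r\), applying \(\rho_{\pm}\in H_{\mathcal{L}}\) gives \(r(\widetilde{l_{2}},0)\in\mathcal{L}\) immediately, so one never needs the product \(r=r_{1}r_{2}\).

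The genuine gap is the one you flagged yourself: the reflection case is not proved, only conjectured (``should yield the missing vertical vector''). Moreover, this gap cannot be filled, because the statement is false when \(\rho\) is a reflection. Take
\[
\mathcal{L}=\langle(1,0,\sqrt{2}),\ (0,1,\sqrt{2}),\ (0,0,1)\rangle_{\mathbb{Z}},\qquad
\Gamma=\mathcal{L}\dotplus\lbrace I_{3},\rho_{+}\rbrace,
\]
where \(\rho(x_{1},x_{2})=(x_{2},x_{1})\); then \(\rho_{+}\) permutes the first two generators and fixes the third, so \(\Gamma\) is crystallographic. For \(y_{0}=1\) we have \((0,y_{0})\in\mathcal{L}\), so condition III of theorem~\ref{thPinho} gives \(\widetilde{\mathcal{L}}=\mathbb{Z}^{2}=\langle\widetilde{l_{1}},\rho\widetilde{l_{1}}\rangle_{\mathbb{Z}}\) with \(\widetilde{l_{1}}=(1,0)\), and \(\rho\in\widetilde{J}\) because \((0,\rho_{+})\in\Gamma\). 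Condition (c) holds for both generators, via \((1,0,\sqrt{2})\) and \((0,1,\sqrt{2})\). Yet by irrationality of \(\sqrt{2}\) the only horizontal vectors of \(\mathcal{L}\) are the integer multiples of \((1,-1,0)\), so no nonzero \(r(\widetilde{l_{1}},0)\) lies in \(\mathcal{L}\): the suspension is not rationally compatible with \(\mathcal{L}\). (This is also consistent with proposition~\ref{propRational}, since \((0,y_{0})\) is not rational with respect to this \(\mathcal{L}\); the proposition and the theorem contradict each other on this example.) So the ``irrational vertical offset'' you worried about really does occur. The paper's own proof slips at exactly this point: a reflection forces the degenerate branch \(n=-m\) of \eqref{conclusion2}, because \(\rho(\widetilde{l_{1}}-\widetilde{l_{2}})=-(\widetilde{l_{1}}-\widetilde{l_{2}})\), and there the claim that subtracting the two vectors of \eqref{conclusion2} yields \((2n\widetilde{l_{1}},0)\) is an algebra error --- the subtraction gives \(2n(\widetilde{l_{1}}-\widetilde{l_{2}},0)\), which is no new information. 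For rotations (order at least three, as you note, forced by linear independence), \(\widetilde{l_{1}}\mp\widetilde{l_{2}}\) is never a real eigenvector of \(\rho\), the degenerate branch never arises, and both your argument and the paper's are sound. So your proposal actually proves the theorem on its full domain of validity; the correct completion is not to crack the reflection case but to add the hypothesis that \(\rho\) is a rotation, which is all that the application in theorem~\ref{thHexagonal} requires.
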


	That the conditions are also necessary is immediate from remark~\ref{remProjLattice}.
	Note that the statement of theorem~\ref{thRational2} excludes oblique and primitive rectangular lattices.

\begin{proof}
%
	Since \(\widetilde{l_{2}} = \rho \widetilde{l_{1}}\), it is sufficient to show that one of \(r(\widetilde{l_{1}},0)\) or \(r(\widetilde{l_{2}},0)\) is in \( \mathcal{L}\) . To see this, suppose, without loss of generality, that \(r(\widetilde{l_{1}},0) \in \mathcal{L}\). Then since \(\rho \in \widetilde{J}\), by corollary~\ref{corolHolohedry1}, either \(\rho_{+} \in H_{\mathcal{L}}\) or \(\rho_{-} \in H_{\mathcal{L}}\). As \(\rho_{+}(r\widetilde{l_{1}},0) = \rho_{-}(r\widetilde{l_{1}},0) = (r\widetilde{l_{2}},0)\), it implies that \(r(\widetilde{l_{2}},0) \in \mathcal{L}\) and therefore, $\mathcal{L}$ has a sublattice \(\mathcal{L}_{r}\).
	
	If for some \(v \in \lbrace\widetilde{l_{1}}, \ \widetilde{l_{2}}\rbrace\) one of the conditions \(a\) or \(b\) is true then, by remark~\ref{remProjLattice}, \((rv,0)\in\mathcal{L}, \ \mbox{for} \ r = 1 \ \mbox{or} \ r = 2\). Hence, all that remains to prove is the case when  $\widetilde{l_{1}}$ and $\widetilde{l_{2}}$ only satisfy condition \(c\).
	
	By hypothesis,
\begin{equation}\label{hypothesis}
 (\widetilde{l_{1}},y_{1}) \ \mbox{and} \ (\widetilde{l_{2}},y_{2}) \ \mbox{are in} \ \mathcal{L}, \ \mbox{for some} \ y_{1}, \ y_{2} \ \in \mathbb{R} 
\end{equation}
this implies that 
\begin{equation}\label{conclusion1}
(\widetilde{l_{1}} + \widetilde{l_{2}}, y_{1} + y_{2}) \in \mathcal{L}
\end{equation}

	Using \eqref{hypothesis} and corollary~\ref{corolHolohedry1}
\[\mbox{either} \ \rho_{+}(\widetilde{l_{1}},y_{1}) = (\widetilde{l_{2}},y_{1}) \in \mathcal{L} \ \mbox{or} \ \rho_{-}(\widetilde{l_{1}},y_{1}) = (\widetilde{l_{2}},-y_{1}) \in \mathcal{L}.\]

	If \((\widetilde{l_{2}},y_{1}) \in \mathcal{L}\) then
\[(\widetilde{l_{2}},y_{1}) + (\widetilde{l_{2}},y_{2}) = (2\widetilde{l_{2}},y_{1} + y_{2})\in \mathcal{L}\]
thus, using \eqref{conclusion1}
\[(\widetilde{l_{1}} + \widetilde{l_{2}}, y_{1} + y_{2}) - (2\widetilde{l_{2}},y_{1} + y_{2}) = (\widetilde{l_{1}} - \widetilde{l_{2}},0) \in \mathcal{L}\]	

	Since \(\lbrace\widetilde{l_{1}}, \ \widetilde{l_{2}}\rbrace\) is a basis to \(\widetilde{\mathcal{L}}\) and \(\rho \in \widetilde{J}\) then
\[\rho(\widetilde{l_{1}} - \widetilde{l_{2}}) = m\widetilde{l_{1}} + n\widetilde{l_{2}}, \ m, \ n \in \mathbb{Z} \]
where \(m, \ n\) are not both equal to zero. Suppose that \(n \neq 0\), then
\begin{equation}\label{conclusion2}
 n(\widetilde{l_{1}} - \widetilde{l_{2}},0),\ (m\widetilde{l_{1}} + n\widetilde{l_{2}}, 0) \in \mathcal{L}
\end{equation} 	
implying that the sum of these last two vectors \(((n+m)\widetilde{l_{1}},0) \in \mathcal{L}\). Therefore, if \(n \neq -m\), \(\mathcal{L}_{r}\) is a sublattice of \( \mathcal{L}\) , where \(r = m + n \in \mathbb{Z}\). If \(n = -m\), we subtract the two expressions in \eqref{conclusion2} to get \((2n\widetilde{l_{1}},0)\in \mathcal{L}\).

	If \((\widetilde{l_{2}},-y_{1}) \in \mathcal{L}\) then 
\[-(\widetilde{l_{2}},-y_{1}) + (\widetilde{l_{2}},y_{2}) = (0, y_{1} + y_{2})\in \mathcal{L}\]
thus, using \eqref{conclusion1}
\[(\widetilde{l_{1}} + \widetilde{l_{2}}, y_{1} + y_{2}) - (0, y_{1} + y_{2}) = (\widetilde{l_{1}} + \widetilde{l_{2}},0) \in \mathcal{L}\]	

	An analogous argument applied to \(\rho(\widetilde{l_{1}} + \widetilde{l_{2}},0)\) finishes the proof.
\end{proof}	

	Let \(\mathcal{L}\subset\mathbb{R}^{3}\) be a lattice and \(P\subset\mathbb{R}^{3}\) be a plane such that \(
P\cap\mathcal{L}\neq\varnothing\). Given $v\in P\cap\mathcal{L}$ there is a rotation $\gamma \in O(3)$ such that $\gamma (P - v)$ is the plane $X0Y = \lbrace(x,y,0); \ x, \ y \in \mathbb{R} \rbrace$. Then we define the \emph{\(y_{0}\)-projection} of \( \mathcal{L}\) \ into $P$ as the lattice $\gamma^{-1}(\widetilde{\mathcal{L}})\subset E(2)$ where $\widetilde{\mathcal{L}}$ is the symmetry group of \(\Pi_{y_{0}}(\mathcal{X}_{\gamma(\mathcal{L}-v)})\).	

	We say that \emph{the \(y_{0}\)-projection} of  \( \mathcal{L}\) \ into the plane $P$ \emph{is a hexagonal plane lattice} if and only if the lattice \(\widetilde{\mathcal{L}}\)	admits as its holohedry a group isomorphic to \(D_{6}\).
		
	Our main result is the following theorem.
Note that the hypothesis of having a 3-fold rotation is not restrictive when one is looking for projections yielding a pattern with  \(D_{6}\) symmetry.
	
\begin{theorem}\label{thHexagonal}
	Let $\mathcal{L}\subset \mathbb{R}^{3}$ be a lattice of a crystallographic group $\Gamma$. Suppose for some $y_{*}$ the   group  \(\Pi_{y_{*}}(\Gamma)\) contains a 3-fold rotation. Then for any $y_{0}$, the $y_{0}$-projection of  \( \mathcal{L}\) \ into the plane $P$ is a hexagonal plane lattice if and only if:
\begin{itemize}
\item[(1)] \(P\cap \mathcal{L}\) contains at least two elements;
\item[(2)] there exists \(\beta \in H_{\mathcal{L}}\) such that: 
\begin{itemize}
\item $\beta$ is a 3-fold rotation; 
\item $P$ is $\beta$-invariant.
\end{itemize}	
\end{itemize}
\end{theorem}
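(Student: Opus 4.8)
The plan is to reduce to standard position and then handle the two implications with the trichotomy of theorem~\ref{thPinho}. Choose $v\in P\cap\mathcal{L}$ and a rotation $\gamma\in O(3)$ with $\gamma(P-v)=X0Y$; since $v\in\mathcal{L}$ we have $\gamma(\mathcal{L}-v)=\gamma\mathcal{L}=:\mathcal{L}'$, so the $y_{0}$-projection into $P$ is hexagonal exactly when $\widetilde{\mathcal{L}}=\Pi_{y_{0}}(\mathcal{L}')$ is hexagonal. In the plane this means a $3$-fold rotation $\rho$ lies in $H_{\widetilde{\mathcal{L}}}$, since a plane lattice is hexagonal iff its holohedry contains a $3$-fold rotation. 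A $3$-fold rotation that leaves a plane invariant has its axis orthogonal to that plane, so after conjugation by $\gamma$ condition (2) reads $\rho_{+}\in H_{\mathcal{L}'}$ and condition (1) reads that $X0Y\cap\mathcal{L}'$ contains some $w_{0}\neq 0$. It therefore suffices to prove that $\widetilde{\mathcal{L}}$ is hexagonal iff these two statements hold, and then to transport the conclusion back by $\gamma^{-1}$.

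For sufficiency I first build a hexagonal sublattice: by condition~I of remark~\ref{remProjLattice} the vector $w_{0}$ lies in $\widetilde{\mathcal{L}}$, and as $\rho_{+}\in H_{\mathcal{L}'}$ fixes both $\mathcal{L}'$ and $X0Y$ the images $\rho w_{0},\rho^{2}w_{0}$ lie in $\widetilde{\mathcal{L}}$ as well. To promote this to $\rho\in H_{\widetilde{\mathcal{L}}}$ I take an arbitrary $w\in\widetilde{\mathcal{L}}$ and show $\rho w\in\widetilde{\mathcal{L}}$ by examining which case of theorem~\ref{thPinho} produced $w$. If $w$ comes from condition~I (so $(w,0)\in\mathcal{L}'$) or from condition~III (where the normal component is unconstrained because $(0,y_{0})\in\mathcal{L}'$), then applying $\rho_{+}\in H_{\mathcal{L}'}$ to the witnessing lattice vector immediately exhibits $\rho w$ by the same condition.

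The genuinely delicate case, and the step I expect to be the main obstacle, is the side-reversing condition~II, in which $((w,y_{0}),\sigma)\in\Gamma'$ with the normal translation pinned to exactly $y_{0}$: here applying $\rho_{+}$ to a lattice vector does not suffice, because I must reproduce an element of $\Gamma'$ with the \emph{same} normal translation $y_{0}$. This is where the standing hypothesis is used, to guarantee a true rotation in the point group of the projection rather than merely in the holohedry. Granting an element of $\Gamma'$ with rotation part $\rho_{+}$, its product with $((w,y_{0}),\sigma)$ is a $\rho_{-}$-element whose square $s=((s',0),\rho_{+}^{2})$ has trivial normal translation, since $\sigma$ reverses the normal coordinate and the two halves cancel on squaring. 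Conjugating $((w,y_{0}),\sigma)$ by $s$ fixes the rotation part $\sigma$ and the normal translation (which equals $y_{0}+2\cdot 0=y_{0}$) while rotating the planar part to $\rho^{2}w$, so $((\rho^{2}w,y_{0}),\sigma)\in\Gamma'$ and $\rho^{2}w\in\widetilde{\mathcal{L}}$; one more application yields $\rho w$. Thus $\rho(\widetilde{\mathcal{L}})\subseteq\widetilde{\mathcal{L}}$, and as $\rho$ has order three, $\rho\in H_{\widetilde{\mathcal{L}}}$ and $\widetilde{\mathcal{L}}$ is hexagonal.

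For necessity, a hexagonal $\widetilde{\mathcal{L}}$ carries a $3$-fold rotation $\rho\in H_{\widetilde{\mathcal{L}}}$; the standing hypothesis places such a rotation in the point group of $\widetilde{\Gamma}$, so corollary~\ref{corolHolohedry1} lifts it to $\rho_{+}\in H_{\mathcal{L}'}$ or $\rho_{-}\in H_{\mathcal{L}'}$, and since $\rho_{-}^{2}=\rho_{+}^{2}$ is a proper $3$-fold rotation this gives condition~(2). For condition~(1) I show $X0Y\cap\mathcal{L}'\neq\{0\}$: if $(0,y_{0})\notin\mathcal{L}'$ only conditions~I and~II of remark~\ref{remProjLattice} can occur, and $\rho_{+}\in H_{\mathcal{L}'}$ forces a nonzero plane vector; if $(0,y_{0})\in\mathcal{L}'$, summing the $\rho_{+}$-orbit of any lift $(w,y_{1})\in\mathcal{L}'$ gives $(0,3y_{1})\in\mathcal{L}'$, so $(0,y_{0})$ is rational with respect to $\mathcal{L}'$ and proposition~\ref{propRational} yields $(q\,w,0)\in\mathcal{L}'$ for a nonzero integer $q$. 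Transporting by $\gamma^{-1}$ finishes both directions.
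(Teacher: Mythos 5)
Your proposal is correct and shares the paper's global architecture --- reduce to \(P=X0Y\) through a lattice point, obtain condition (2) in the necessity direction by lifting the planar \(3\)-fold rotation via corollary~\ref{corolHolohedry1}, and in the sufficiency direction build the hexagonal sublattice \(\langle w_{0},\rho w_{0}\rangle_{\mathbb{Z}}\) and then upgrade to \(\rho\in H_{\widetilde{\mathcal{L}}}\) --- but your execution differs at exactly the two places where the paper delegates to earlier results. Where the paper asserts that finiteness of the order of \(\alpha\) yields \((0,\alpha_{+})\in\Gamma\) or \((0,\alpha_{-})\in\Gamma\) and then quotes condition 2 of corollary~\ref{corolHolohedry2}, you re-prove closure of \(\widetilde{\mathcal{L}}\) under \(\rho\) by hand, and your conjugation argument (square a \(\rho_{-}\)-product to kill the normal translation, then conjugate the side-reversing element \(((w,y_{0}),\sigma)\)) works for a granted \(3\)-fold element with \emph{arbitrary} translation part \(((t,y'),\beta_{+})\). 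This is strictly more robust than the paper's step, which silently excludes the screw-type elements \(((0,y_{1}),\alpha_{+})\) that case 3 of theorem~\ref{thPinho} can produce and from which \((0,\alpha_{\pm})\in\Gamma\) does not follow; your version buys a proof that does not need a zero-translation representative. For necessity of (1) the paper invokes theorem~\ref{thRational2}, whereas you argue directly from remark~\ref{remProjLattice} together with the orbit-sum identity \((I+\rho+\rho^{2})w=0\).

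Two small repairs, both available within your own toolkit. First, in case III of remark~\ref{remProjLattice} the witness may be a side-reversing element \(((w,y_{1}),\sigma)\in\Gamma\) rather than a lattice vector, so ``applying \(\rho_{+}\) to the witnessing lattice vector'' is not literally available there; but your conjugation computation applies verbatim with \(y_{1}\) in place of \(y_{0}\) (the specific value of the normal translation is never used), producing \(((\rho^{2}w,y_{1}),\sigma)\in\Gamma\), so this sub-case folds into your condition-II argument. Second, in the necessity of (1), the inference from \((0,3y_{1})\in\mathcal{L}'\) to ``\((0,y_{0})\) is rational with respect to \(\mathcal{L}'\)'' does not hold under the paper's literal definition of rationality (it would require \(y_{0}y_{1}\in\mathbb{Q}\), not merely commensurability of \(3y_{1}\) with \(y_{0}\)); you do not need proposition~\ref{propRational} at all, since \(3(w,y_{1})-(0,3y_{1})=(3w,0)\in\mathcal{L}'\) already exhibits the required nonzero vector of \(X0Y\cap\mathcal{L}'\) directly. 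Neither point is a gap in the idea; both are one-line fixes.
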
	
	
\begin{proof}
	Suppose first that \((0,0,0)\in P \cap \mathcal{L}\). 
	To show that  conditions (1) and (2) are necessary let us consider, without loss of generality, that  \(P = X0Y\). Therefore, the conditions hold by theorem~\ref{thRational2}.


To prove that  condition (1) and (2) are sufficient consider the 3-fold rotation $\beta \in H_{\mathcal{L}}$. By \cite{miller72}, theorem 2.1 and the proof of the crystallographic restriction theorem, in the same reference, there exists only one subspace of dimension 2 invariant by $\beta$. Such a plane is the plane perpendicular to its rotation axis.  So, let $P$ be this plane.

	Since \(P\cap \mathcal{L}\neq \lbrace (0,0,0) \rbrace\), let v be a nonzero element of minimum length in \(P\cap \mathcal{L}\) and consider the lattice $\mathcal{L}^{'} = \langle v, \beta v\rangle_{\mathbb{Z}}$. As $\beta$ has order 3 the sublatice \(\mathcal{L}^{'}\) is a hexagonal plane lattice.
	
	To finish the proof, consider $y_{0} \in \mathbb{R}$; we  prove that the $y_{0}$-projection of \(\mathcal{L}\) into the plane $P$ is a hexagonal plane lattice. Let $(0, \alpha)  \in \Pi_{y_{*}}(\Gamma)$, where $\alpha$ is a 3-fold rotation. Then by theorem~\ref{thPinho}, one of the conditions holds:
\begin{itemize}
\item [1] $(0, \alpha_{+}) \in \Gamma$;
\item [2] $((0,y_{*}), \alpha_{-})\in \Gamma$;
\item [3] $(0,y_{*}) \in \mathcal{L}$ and either $((0,y_{1}), \alpha_{+})$ or $((0,y_{1}), \alpha_{-})$ is in $\Gamma$.
\end{itemize}	

	Since the order of $\alpha$ is finite , we have  that either $(0, \alpha_{+}) \in \Gamma$ or $(0, \alpha_{-}) \in \Gamma$. Then, the result follows  by condition 2 of corollary~\ref{corolHolohedry2}.
	
	
	If \((0,0,0) \notin P \cap \mathcal{L}\), note that the proof can be reduced to the previous case by a translation.
%
\end{proof}

\begin{rem}\label{remHexagonal}
 \emph{Theorem~\ref{thHexagonal} shows that a possible way to obtain patterns with hexagonal symmetry, by $y_{0}$-projection, is to project the functions $f\in \mathcal{X}_{\mathcal{L}}$ in a plane invariant by the action of some element $\beta \in H_{\mathcal{L}}$ with order 
  three.
 After finding one of those planes, in order to obtain projections as in definition~\ref{defProjection}, we only need to change coordinates. The reader can see an example with the bcc lattice in \cite{gomes99}.}

	\emph{We are grateful to an anonymous referee who pointed out that for certain specific widths of the projection this can be obtained by other means. However, in these cases the symmetry group of the projected functions, for 
most
$y_{0}$, has a very small point group and this is not interesting for the study of bifurcating patterns. 
More specifically, we are interested in relating hexagonal patterns of different complexity in solutions of the same differential equation with symmetry.
As the projection width $y_0$ varies, one may obtain hexagonal patterns with different symmetry groups, corresponding to different patterns, as illustrated in the figures at the end of this article.
Bifurcation occurs via symmetry-breaking and hence, more symmetry (a bigger point group) makes the bifurcation problem more interesting.
}

\end{rem}
	
	As a consequence of theorem~\ref{thHexagonal}, we are able to list all the Bravais lattices that may be projected to produce a 2-dimensional hexagonal pattern.
	
\begin{theorem}\label{thBravais}
The Bravais lattices that project to a hexagonal plane lattice, under the conditions of theorem~\ref{thHexagonal} are:
\begin{itemize}
\item[1.] Primitive Cubic lattice;
\item[2.] Body-centred Cubic lattice;
\item[3.] Face-centred Cubic lattice;
\item[4.] Hexagonal lattice; and
\item[5.] Rhombohedral lattice. 
\end{itemize}
Moreover, up to change of coordinates, for the first three lattices the plane of projection must be parallel to one of the planes in Table 1. For the Hexagonal and Rhombohedral lattices the plane of projection must be parallel to the plane \(X0Y\).
\end{theorem}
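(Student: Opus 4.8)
The plan is to treat Theorem~\ref{thHexagonal} as a decision criterion and reduce the classification to a finite inspection of the seven holohedries. By Theorem~\ref{thHexagonal}, once the standing hypothesis (that some projection $\Pi_{y_*}(\Gamma)$ carries a $3$-fold rotation) is met, a lattice $\mathcal{L}$ admits a hexagonal $y_0$-projection into a plane $P$ exactly when $P\cap\mathcal{L}$ has at least two points and some $3$-fold $\beta\in H_{\mathcal{L}}$ leaves $P$ invariant. The second requirement forces a $3$-fold rotation into the holohedry, so I would first note that, for the symmorphic space group of $\mathcal{L}$, the standing hypothesis is automatic whenever $H_{\mathcal{L}}$ contains such a $\beta$: since $(0,\beta_{+})\in\Gamma$, condition~I of Theorem~\ref{thPinho} puts $\beta$ in $\Pi_{y_0}(\Gamma)$ for every $y_0$. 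Thus the whole question collapses to: which of the fourteen Bravais lattices carry a $3$-fold rotation in their holohedry?

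The second step is the enumeration itself. Sorting the Bravais lattices by crystal system gives the holohedries $C_i$, $C_{2h}$, $D_{2h}$, $D_{4h}$, $D_{3d}$, $D_{6h}$, $O_h$ (triclinic, monoclinic, orthorhombic, tetragonal, trigonal, hexagonal, cubic). By the crystallographic restriction theorem---already invoked in the proof of Theorem~\ref{thHexagonal} through \cite{miller72}---the only holohedries possessing an element of order three are $D_{3d}$, $D_{6h}$ and $O_h$. The cubic system ($O_h$) contains the Primitive, Body-centred and Face-centred lattices; the trigonal system ($D_{3d}$) the Rhombohedral lattice; the hexagonal system ($D_{6h}$) the Hexagonal lattice. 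This yields exactly the five entries of the list, and condition (2) of Theorem~\ref{thHexagonal} shows that no other Bravais lattice can occur.

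For the ``Moreover'' clause I would verify the two-point condition and then locate the planes. Fixing the $3$-fold axis through the origin and taking any lattice vector $v$ not parallel to it, the vector $(\beta-I)v=\beta v-v$ is a nonzero element of $\mathcal{L}$ lying in the plane $P$ perpendicular to the axis (because $\beta$ fixes the axis, so $\beta v-v$ is orthogonal to it and thus lies in $P$); together with the origin this supplies the two points required. Hence each of the five lattices genuinely projects to a hexagonal plane lattice. To read off the admissible planes I would identify the axes: in the cubic system these are the four body-diagonals, whose perpendicular planes are the $\{111\}$-type planes recorded in Table~1---all equivalent under $O_h$, which is why the statement is phrased up to a change of coordinates and why Primitive, Body-centred and Face-centred share the same family. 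In the trigonal and hexagonal systems there is a single $3$-fold (respectively $6$-fold) axis; orienting it along the $z$-direction makes the perpendicular plane $X0Y$, which gives the final clause.

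The part demanding the most care is the cubic case of the ``Moreover'' clause, namely producing Table~1: one must enumerate the $3$-fold axes, compute the corresponding perpendicular planes in the paper's coordinates, decide which are genuinely distinct rather than related by a coordinate change, and confirm that the $2$-dimensional lattice cut out in each plane is hexagonal separately for the Primitive, Body-centred and Face-centred lattices, since these differ in precisely which lattice points fall on a given $\{111\}$ plane. The uniqueness of the invariant plane of a $3$-fold rotation, quoted from \cite{miller72} in the proof of Theorem~\ref{thHexagonal}, is what keeps this case analysis finite and unambiguous.
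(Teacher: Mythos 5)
Your proposal is correct and follows essentially the same route as the paper: both reduce the classification, via theorem~\ref{thHexagonal}, to asking which of the seven holohedries contain an element of order three, exclude the triclinic, monoclinic, orthorhombic and tetragonal systems on that ground, and then identify the admissible planes as those perpendicular to the $3$-fold axes --- the four body diagonals in the cubic case (yielding Table 1) and the unique principal axis, oriented along $z$, in the hexagonal and rhombohedral cases. The differences are minor: where the paper exhibits explicit matrix generators ($\gamma_{1},\dots,\gamma_{4}$ for the cubic family, $\rho_{z}$ and $\gamma_{x}$ for the hexagonal/rhombohedral family) and reads off axes and perpendicular planes by computation, you instead invoke the standard Schoenflies list of holohedries; and your explicit verification of condition (1) of theorem~\ref{thHexagonal} --- that $\beta v - v$ is a nonzero lattice vector orthogonal to the axis whenever $v \in \mathcal{L}$ is not parallel to it --- is a genuine improvement, since the paper only says one must ``see if the plane perpendicular to the rotation axis intersects the lattice'' without carrying out that check. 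One clarification: the step you flag as demanding the most care (confirming separately for the Primitive, Body-centred and Face-centred lattices that the planar section in each $\{111\}$-type plane is hexagonal) is not actually required, because the sufficiency direction of theorem~\ref{thHexagonal} already manufactures the hexagonal sublattice $\langle v, \beta v\rangle_{\mathbb{Z}}$ from conditions (1) and (2) alone; the only per-lattice input is a nonempty planar intersection with two lattice points, which your $\beta v - v$ argument supplies uniformly for all five lattices.
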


\begin{proof}
	It is immediate from theorem~\ref{thHexagonal} that we can exclude the following Bravais lattices: triclinic, monoclinic, orthorhombic and tetragonal, since the holohedries of these lattices do not have elements of order 
three.
	
	To see if the other Bravais lattices have hexagonal projected symmetries, we need to examine the rotations of order three and six in their holohedries and see if the plane perpendicular to their rotation axes intersects the lattice.

	The group of rotational symmetries of the Cubic lattice (as well as the Body Centred Cubic lattice and the Face Centred Cubic lattice) is isomorphic to $S_{4}$, the group of permutation of four elements. So, in the holohedry of the Cubic lattice we only have rotations of order one, two or three. 
	Consider a systems of generators for a representative for the Cubic lattice  \( \mathcal{L}\) , in the standard basis of \(\mathbb{R}^{3}\), given by:
\[(1,0,0), \ (0,1,0), \ (0,0,1)\]	
	
Then, the matrix representation of the rotations of order 3 in \(H_{\mathcal{L}}\) are: 
\[\gamma_{1} = \left(\begin{array}{ccc}
 0  & -1 & 0 \\ 
 0  &  0 & 1 \\ 
-1  &  0 & 0
\end{array}\right), \ \ \gamma_{2} = \left(\begin{array}{ccc}
 0 & 1 &  0 \\ 
 0 & 0 & -1 \\ 
-1 & 0 &  0
\end{array}\right),\]
\[\gamma_{3} = \left(\begin{array}{ccc}
 0 &  0 & 1 \\ 
-1 &  0 & 0 \\ 
 0 & -1 & 0
\end{array}\right),\ \ \gamma_{4} = \left(\begin{array}{ccc}
0 & 1 & 0 \\ 
0 & 0 & 1 \\ 
1 & 0 & 0
\end{array}\right)\]

	Two-dimensional spaces perpendicular to the rotation axis of each one of these rotations are given in Table 1.
	
\begin{table}
\caption{Two-dimensional spaces perpendicular to the rotation axis of each one of the rotations \(\gamma_{i}\). Here we denote by \(\langle v \rangle, v \in \mathbb{R}^{3}\) the subspace generated by \(v\).}
    \begin{tabular}{ccl}
   \textbf{Rotation} & \textbf{Rotation Axis} & \textbf{Perpendicular Plane}  \\ 
    \(\gamma_{1}\) & \(\langle (1,1,-1) \rangle\)& \(P_{1}=\lbrace (x,y,z); z = x +y\rbrace\)  \\ 
    \(\gamma_{2}\) & \(\langle (1,-1,-1) \rangle\) & \(P_{2}=\lbrace (x,y,z); z = x - y\rbrace\)  \\ 
    \(\gamma_{3}\) & \(\langle (1,-1,1) \rangle\) & \(P_{3} = \lbrace (x,y,z); z = -x + y\rbrace\)  \\ 
    \(\gamma_{4}\) & \(\langle (1,1,1) \rangle\)  & \(P_{4} = \lbrace (x,y,z); z = -(x + y)\rbrace\)
    \end{tabular}
\end{table}

	This means that for the fist three lattices in the list, the projection of functions \(f \in \mathcal{X}_{\mathcal{L}}\) into a plane have hexagonal symmetries only if the plane is parallel to one of the plane subspaces given in table 1. 
	
	Consider now a 3-dimensional Hexagonal lattice. Its group of rotational symmetries has order twelve and it has a subgroup of order six consisting of the rotational symmetries of the Rhombohedral lattice. 
	
	Let the representatives for the Hexagonal and Rhombohedral lattices, be generated by:
\[(1,0,0), (\frac{1}{2},\frac{\sqrt{3}}{2},0), (0,0,c) \ \ c \neq 0,\ \pm 1. \]

\[(1,0,1), (-\frac{1}{2},\frac{\sqrt{3}}{2},1), (-\frac{1}{2},-\frac{\sqrt{3}}{2},1)\]
respectively. Then, the twelve rotations in the holohedry of the Hexagonal lattice are generated by:
\[\rho_{z} = \left(\begin{array}{ccc}
	\frac{1}{2} & -\frac{\sqrt{3}}{2} & 0 \\ 
	\frac{\sqrt{3}}{2} & \frac{1}{2}  & 0 \\ 
	0 & 0 & 1
	\end{array}\right) \ \mbox{and} \ \gamma_{x} = \left(\begin{array}{ccc}
	1 & 0 & 0 \\ 
	0 & -1 & 0 \\ 
	0 & 0 & -1
	\end{array}\right)\]
	
	The generators of the group of rotational symmetries of the Rhombohedral lattice are then \(\rho_{z}^{2}\) and \(\gamma_{x}\).
	
	We conclude that the only rotations of order 6 in the holohedry of the Hexagonal lattice are \(\rho_{z}\) and \(\rho_{z}^{5}\), and of order 3 \(\rho_{z}^{2}\) and \(\rho_{z}^{4}\). 
	
	Therefore, the \(y_{0}\)-projection of the Hexagonal and Rhombohedral lattices is a hexagonal plane sublattice if and only if the \(y_{0}\)-projection is made into a plane parallel to the plane \(X0Y\). 
\end{proof}

\section{Hexagonal projected symmetries of the Primitive Cubic lattice}

	We conclude the article with an example to illustrate the hexagonal symmetries obtained by \(z_{0}\)-projection of functions with periods in the Primitive Cubic lattice, for all \(z_{0} \in \mathbb{R}\).
	
	Consider a 3-dimensional crystallographic group, \(\Gamma = \mathcal{L}\dotplus H_{\mathcal{L}}\), where  \( \mathcal{L}\) \ is the Primitive Cubic Lattice generated by the vectors \((1,0,0), (0,1,0) \ \mbox{and} \ (0,0,1)\) over \(\mathbb{Z}\), and \(H_{\mathcal{L}}\) its holohedry. 
		
	 Without loss of generality, consider the projection of \(\Gamma\) on \(P_{1}\) (see Table 1). 
	
	From theorem~\ref{thHexagonal}, the Cubic lattice has a hexagonal plane sublattice that intersects \(P_{1}\). This sublattice is generated by:
\begin{equation} \label{sublattice}
(0,1,1), \ (1,0,1)
\end{equation}	

	To make our calculations easier and to set up the hexagonal symmetries in the standard way consider the new basis \(\lbrace (0,1,1), (1,0,1), (0,0,1)\rbrace\) for the lattice  \( \mathcal{L}\) . Now multiply  \( \mathcal{L}\) \ by the scalar \(\frac{1}{\sqrt{2}}\) in order to normalise the vectors of \eqref{sublattice}. With these changes the crystallographic group \(\Gamma\) has the new translational subgroup generated by the vectors:
\[
v_{1} = (0,\frac{1}{\sqrt{2}},\frac{1}{\sqrt{2}}), \ \ v_{2} = (\frac{1}{\sqrt{2}},0,\frac{1}{\sqrt{2}}),\ \ v_{3} = (0,0,\frac{1}{\sqrt{2}})
\]

	Projection of \(\Gamma\) on \(P_{1}\), as in definition~\ref{defProjection}, can be done after a change of coordinates that transforms \(P_{1}\) into \(X0Y\). Consider that change given by the orthonormal matrix 
\[A = \left(\begin{array}{ccc}
         0         & \frac{1}{\sqrt{2}} &  \frac{1}{\sqrt{2}} \\ 
\frac{2}{\sqrt{6}} & \frac{-1}{\sqrt{6}} & \frac{1}{\sqrt{6}} \\ 
\frac{1}{\sqrt{3}} & \frac{1}{\sqrt{3}} & \frac{-1}{\sqrt{3}}
\end{array}\right)\] 
	
	Then, in the new system of coordinates \(X = Ax\), we obtain the base for the Primitive Cubic lattice given by:
\begin{equation} \label{lattice}
 l_{1}=(1,0,0), l_{2}=(\frac{1}{2},\frac{\sqrt{3}}{2},0), l_{3}=(\frac{1}{2},\frac{\sqrt{3}}{6},\frac{-\sqrt{6}}{6})
\end{equation}
Observe that we changed the position of  \( \mathcal{L}\) \ as prescribed by theorem~\ref{thHexagonal}.

	We proceed to describe the symmetries of the space \(\Pi_{z_{0}}(\mathcal{X}_{\Gamma})\), for each \(z_{0} \in \mathbb{R}\). For this, we need to obtain the subgroups \(\widehat{\Gamma}\) and \(\Gamma_{z_{0}}\) of \(\Gamma\). Denote by \(\Sigma_{z_{0}} = \mathcal{L}_{z_{0}}\dotplus J_{z_{0}}\) the subgroup of \(E(2)\) of all symmetries of \(\Pi_{z_{0}}(\mathcal{X}_{\Gamma})\).
	
	It is straightforward to see that the elements of \(\Gamma\) with orthogonal part \(\alpha_{\pm}\) are in the group
\[\widehat{\Gamma} = \lbrace((v,z),\rho);(v,z)\in \mathcal{L}, \ \ \rho \in \widehat{J}\rbrace\]
where \(\widehat{J}\) is the group generated by
\[\gamma = \left(\begin{array}{ccc}
\frac{1}{2}        &  -\frac{\sqrt{3}}{2} &  0 \\ 
\frac{\sqrt{3}}{2} &  \frac{1}{2}        &  0 \\ 
    0              &      0              & -1
\end{array}\right) \ \ \mbox{and} \ \ \kappa = \left(\begin{array}{ccc} 
-1 & 0 & 0 \\ 
0  & 1 & 0 \\ 
0  & 0 & 1
\end{array}\right) \] 
and the group \(\Gamma_{z_{0}}\) has a subgroup \(H = \overline{\mathcal{L}} \ \dotplus \ \overline{J}\), for all \(z_{0} \in \mathbb{R}\), where \(\overline{\mathcal{L}}\) is the translation subgroup \(\overline{\mathcal{L}} = \langle l_{1}, \ l_{2}\rangle_{\mathbb{Z}}\) and \(\overline{J}\) is the subgroup generated by \(((0,0,0),\kappa)\) and \(((0,0,0),-\gamma)\). Using statement I of theorem~\ref{thPinho}, for all \(z_{0} \in \mathbb{R}\) all the functions \(f \in \Pi_{z_{0}}(\mathcal{X}_{\Gamma})\) are \((1,0)\), and \((\frac{1}{2},\frac{\sqrt{3}}{2})\) periodic and invariant for the action of 
\[\kappa^\prime=\left(\begin{array}{rr} 
-1 & 0  \\ 
0  & 1  
\end{array}\right) \ \mbox{and}
 \ -\gamma^\prime=\left(\begin{array}{cc}
-\frac{1}{2}        &  \frac{\sqrt{3}}{2}  \\ 
-\frac{\sqrt{3}}{2} &  -\frac{1}{2}         
\end{array}\right)\]

	In Table 2 we list the group \(\Gamma_{z_{0}}\), for each \(z_{0} \in \mathbb{R}\), and describe the respective projected symmetries.
	 
\begin{table}
\caption{Projection of \(\Gamma = \mathcal{L}\dotplus H_{\mathcal{L}}\), for each \(z_{0} \in \mathbb{R}\).}
    \begin{tabular}{l|l|l}
   \textbf{\(z_{0} \in \mathbb{R}\)} & \textbf{\(\Gamma_{z_{0}}\)} & \textbf{\(\Sigma_{z_{0}}=\mathcal{L}_{z_{0}}\dotplus J_{z_{0}}\)}  \\ 
\hline   
    \(z_{0} = \frac{3n}{\sqrt{6}}, \ n \in \mathbb{Z}\backslash\lbrace0\rbrace\) \\ then \((0,0,\frac{3n}{\sqrt{6}})\in\mathcal{L}\) & \(\Gamma_{z_{0}} = \widehat{\Gamma}\) &  \shortstack[l]{\(\mathcal{L}_{z_{0}} = \langle(\frac{1}{2},\frac{\sqrt{3}}{6}),(\frac{1}{2},\frac{-\sqrt{3}}{6})\rangle_{\mathbb{Z}}\)\\ \(J_{z_{0}} = D_{6} = \langle\gamma^{'},\ \kappa^{'}\rangle\)} \\
 \hline    
    \(z_{0} = \frac{3n-1}{\sqrt{6}}, \ n \in \mathbb{Z}\backslash\lbrace0\rbrace\) \\ then \((\frac{1}{2},\frac{\sqrt{3}}{6},\frac{3n-1}{\sqrt{6}})\in\mathcal{L}\) & \shortstack[l]{ \(\Gamma_{z_{0}}\) contains \\ \(((\frac{1}{2},\frac{\sqrt{3}}{6},\frac{3n-1}{\sqrt{6}}), \gamma)\)} & \shortstack[l]{\(\mathcal{L}_{z_{0}} = \langle(1,0),(\frac{1}{2},\frac{\sqrt{3}}{2})\rangle_{\mathbb{Z}}\)\\ \(J_{z_{0}} = \langle((\frac{1}{2},\frac{\sqrt{3}}{6}),\gamma^{'}),\ \kappa^{'}\rangle\)}  \\ 
\hline    
     \(z_{0} = \frac{3n+1}{\sqrt{6}}, \ n \in \mathbb{Z}\backslash\lbrace0\rbrace\) \\ then \((\frac{1}{2},\frac{-\sqrt{3}}{6},\frac{3n+1}{\sqrt{6}})\in\mathcal{L}\)  &  \shortstack[l]{ \(\Gamma_{z_{0}}\) contains\\ \(((\frac{1}{2},\frac{-\sqrt{3}}{6},\frac{3n+1}{\sqrt{6}}),\gamma)\)} & \shortstack[l]{\(\mathcal{L}_{z_{0}} = \langle(1,0),(\frac{1}{2},\frac{\sqrt{3}}{2})\rangle_{\mathbb{Z}}\)\\ \(J_{z_{0}} = \langle((\frac{1}{2},\frac{-\sqrt{3}}{6}),\gamma^{'}),\ \kappa^{'}\rangle\)}   \\ 
\hline    
    For \(z_{0}\) different \\ of the cases before  & \(\Gamma_{z_{0}} = H\)  & \shortstack[l]{\(\mathcal{L}_{z_{0}} = \langle(1,0),(\frac{1}{2},\frac{\sqrt{3}}{2})\rangle_{\mathbb{Z}}\)\\ \(J_{z_{0}} = \langle -\gamma^{'},\ \kappa^{'}\rangle\)}
    \end{tabular}
\end{table}
	 
	 We assume that all the functions \(f:\mathbb{R}^{3} \rightarrow \mathbb{R}\) in \(\mathcal{X}_{\mathcal{L}}\) admit a unique formal Fourier expansion in terms of the waves
\[w_{k}(x,y,z) = \exp(2\pi i \langle k, (x,y,z)\rangle)\]
where \(k\) is a \emph{wave vector} in the dual lattice, \(\mathcal{L}^{*} = \lbrace k \in \mathbb{R}^{3}; \ \langle k, l_{i} \rangle \in \mathbb{Z}, \ i = 1, \ 2, \ 3 \rbrace\), of \( \mathcal{L}\) \ given in \eqref{lattice}, with \emph{wave number} \(\vert k\vert\),  where \((x,y,z) \in \mathbb{R}^{3}\) and \(\langle\cdot,\cdot\rangle\) is the usual inner product in \(\mathbb{R}^{3}\). Thus,  
\[f(x,y,z) = \displaystyle \sum_{k\in \mathcal{L}^{*}} z_{k}w_{k}(x,y,z)\]
where \(z_{k}\) is the Fourier coefficient, for each \(k\in \mathcal{L}^{*}\), and with the restriction \(z_{-k} = \overline{z_{k}}\).

	Therefore, we can write
\[\mathcal{X}_{\mathcal{L}} = \displaystyle \bigoplus_{k \in \mathcal{L}^{'}} V_{k}\]
for
\[\mathcal{L}^{'} = \lbrace k = (k_{1},k_{2})\in \mathcal{L}^{*};\ k_{1}>0 \ \mbox{or} \  k_{1} = 0 \ \mbox{and} \ k_{2}>0 \rbrace \] 
and 
\[ V_{k} = \lbrace Re(zw_{k}(x,y,z)); \ z \in \mathbb{C}\rbrace \cong \mathbb{C}\]

	Note that \(\mathcal{X}_{\Gamma}\) is a subspace of \(\mathcal{X}_{\mathcal{L}}\).

	We say that the space 
\[\bigoplus_{\mid K\mid = a} V_K= V_{K_{1}}\oplus V_{K_{2}}\oplus\cdots\oplus V_{K_{s}}\]
 is a $2s$-dimensional representation of the action of $\Gamma$ on the space \(\mathcal{X}_{\mathcal{L}}\).
	
	A straightforward calculation shows that the function
\begin{equation} \label{function}
u(x,y,z) = \displaystyle \sum_{\mid k \mid = \sqrt{2}} \exp (2\pi i k\cdot (x,y,z))
\end{equation}
is \(\Gamma\)-invariant.

	The contour plots of the projections of $u$ are shown in  Figure~\ref{figPC}, with the symmetries given in Table 2. In \cite{dionne93} it is shown that the function $u$ belongs to a 6-dimensional representation.

\begin{figure}	\label{figPC}
(a){\includegraphics[width=0.20\textwidth]{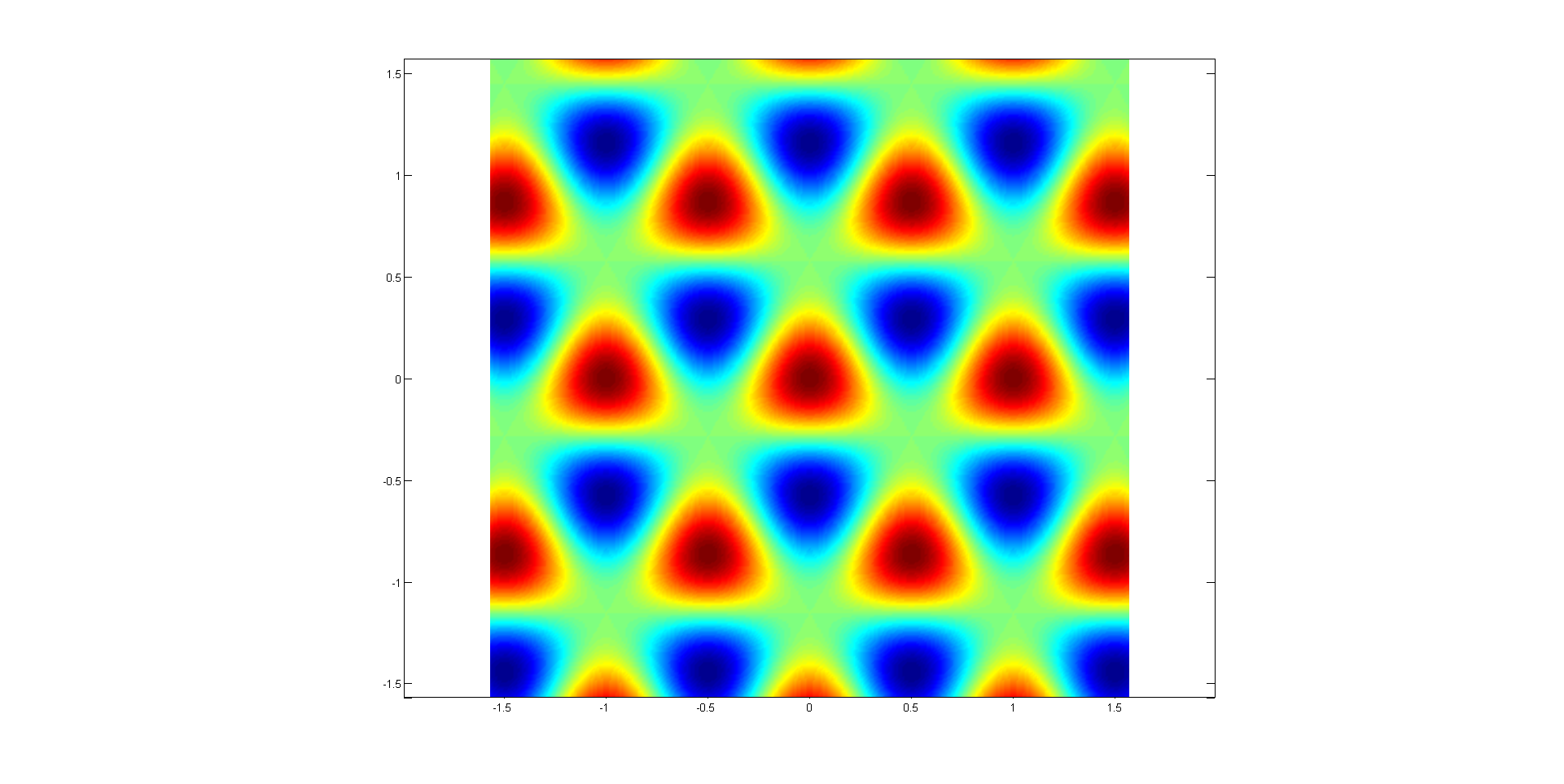}}
(b){\includegraphics[width=0.2\textwidth]{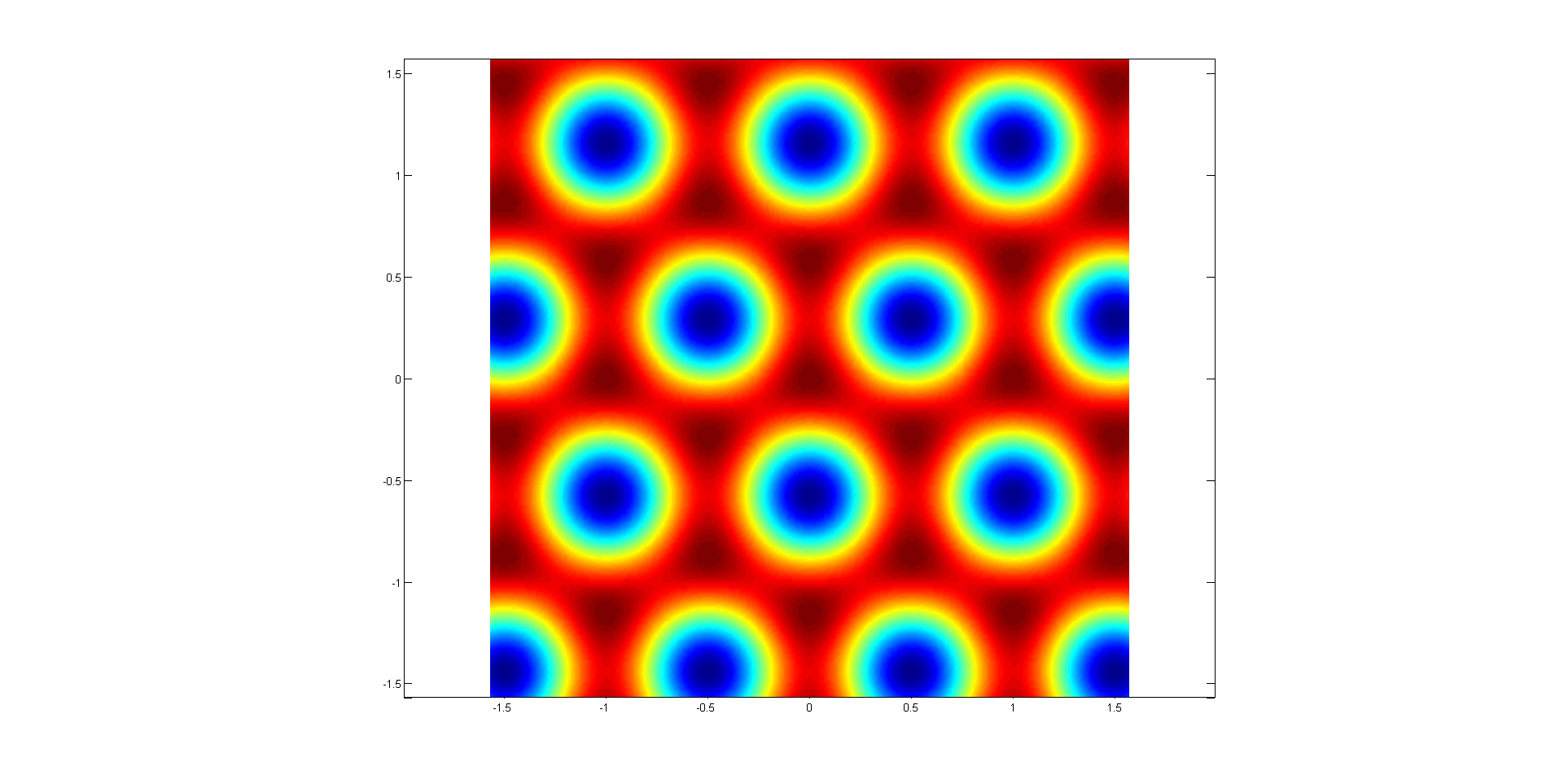}}
(c){\includegraphics[width=0.2\textwidth]{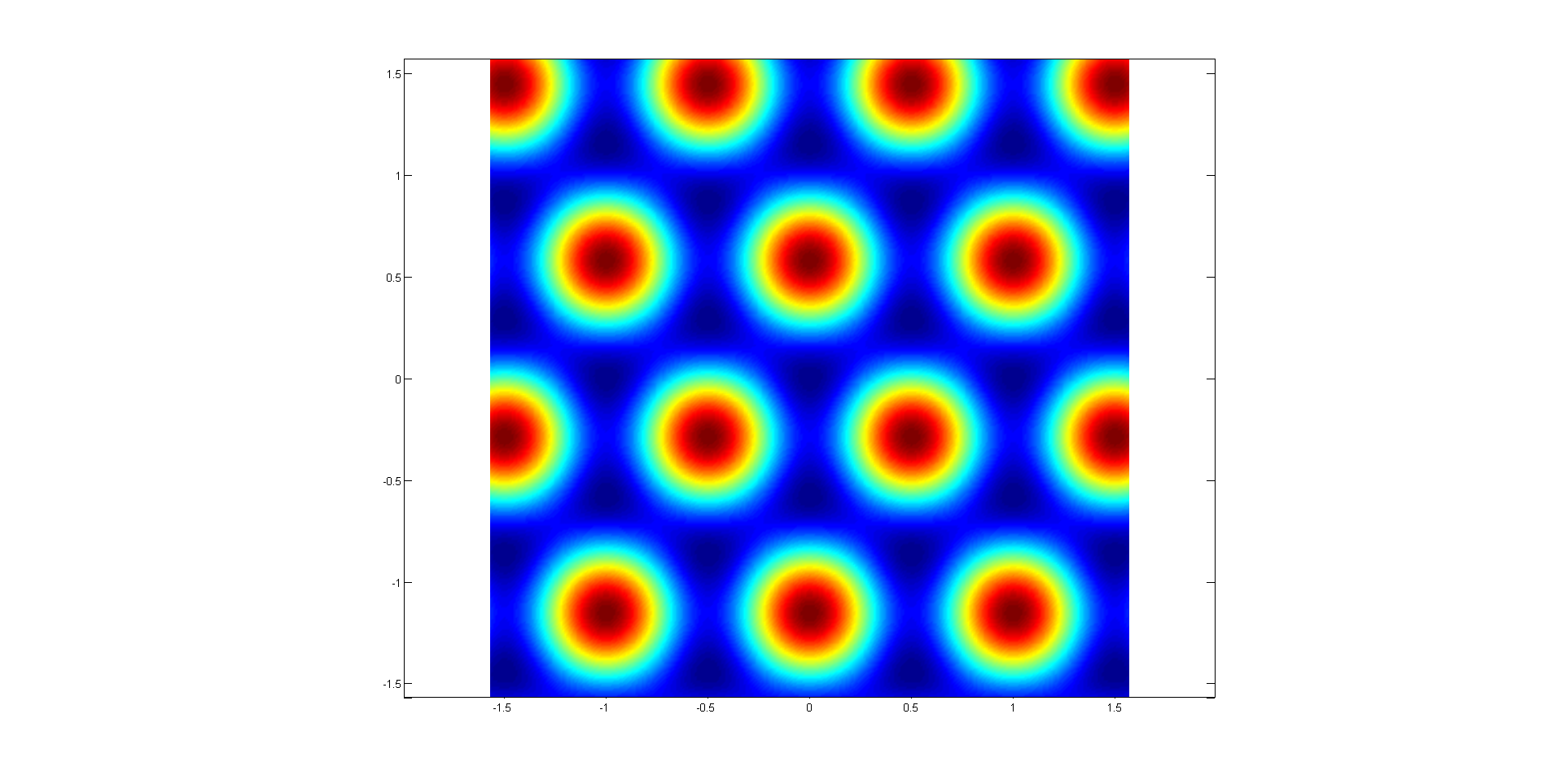}}
(d){\includegraphics[width=0.2\textwidth]{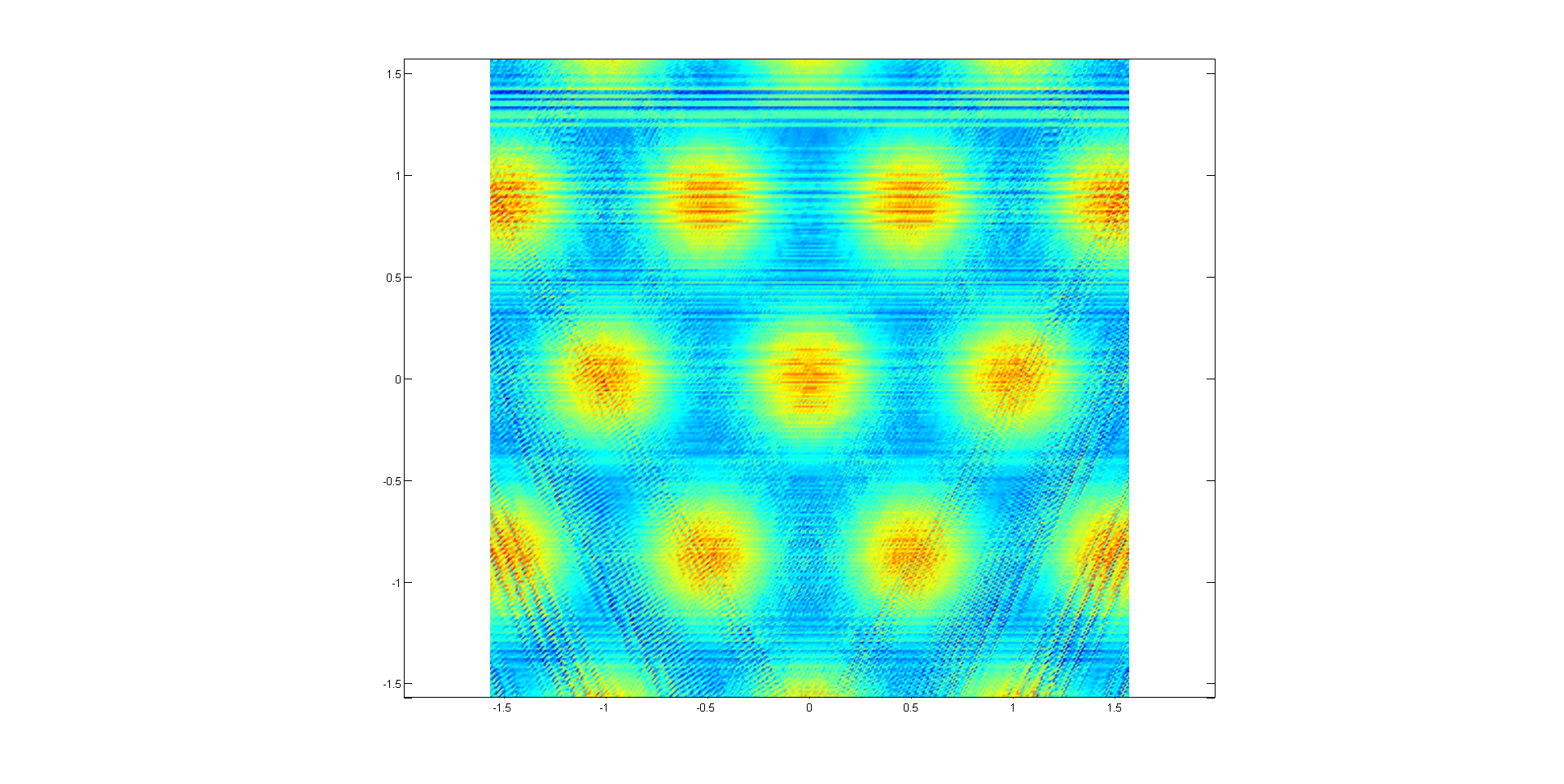}}
\caption{ Projection of pattern $u$ in a 6-dimensional representation with Primitive Cubic lattice periodicity.  
Contour plots of the integral of $u$ over different depths \(z_{0}\). (a) \(z_{0} = \frac{1}{ 2\sqrt{6}}\). (b) \(z_{0} = \frac{1}{\sqrt{6}}\). (c) \(z_{0} = \frac{2}{\sqrt{6}}\). (d) \(z_{0} = \frac{3}{ \sqrt{6}}\).  The same pictures occur for the projection of a strip of half this height of a pattern in a 8-dimensional representation with face centred cubic periodicity.} 
\end{figure} 

	The Body Centred Cubic lattice shows a different configuration, illustrated in Figure~\ref{figBCC}.
	
\begin{figure}	\label{figBCC}
(a){\includegraphics[width=0.2\textwidth]{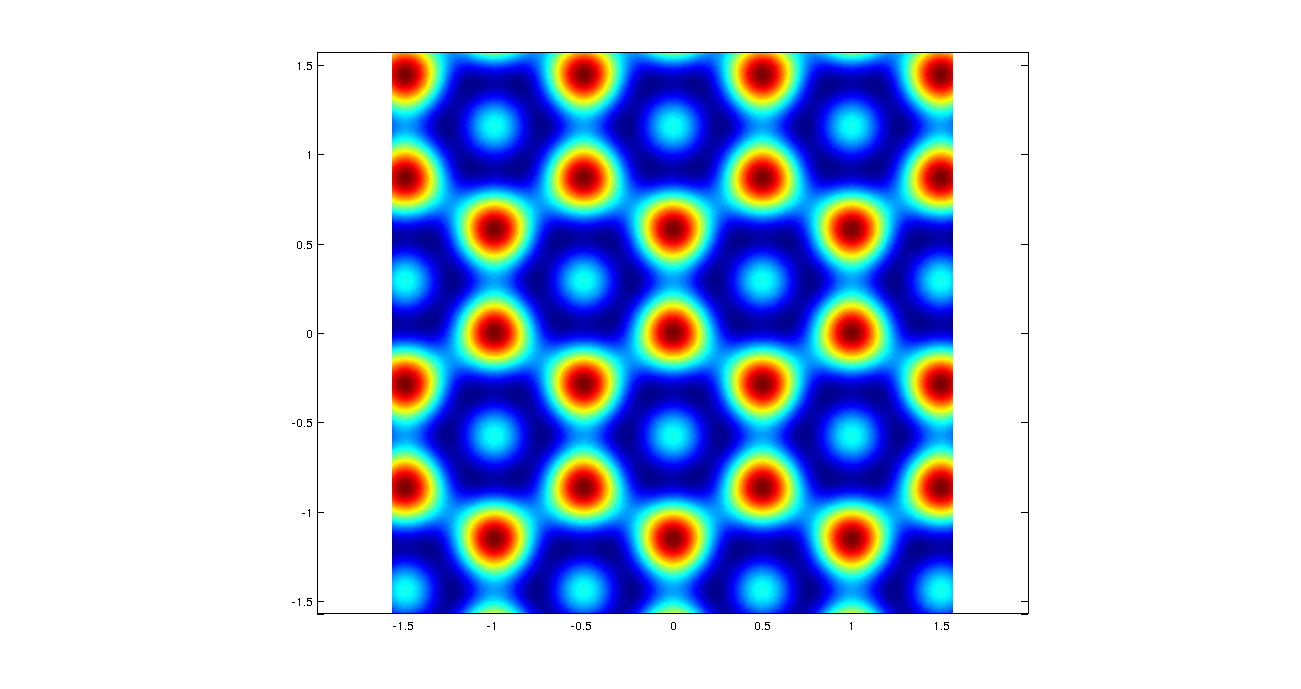}}
(b){\includegraphics[width=0.2\textwidth]{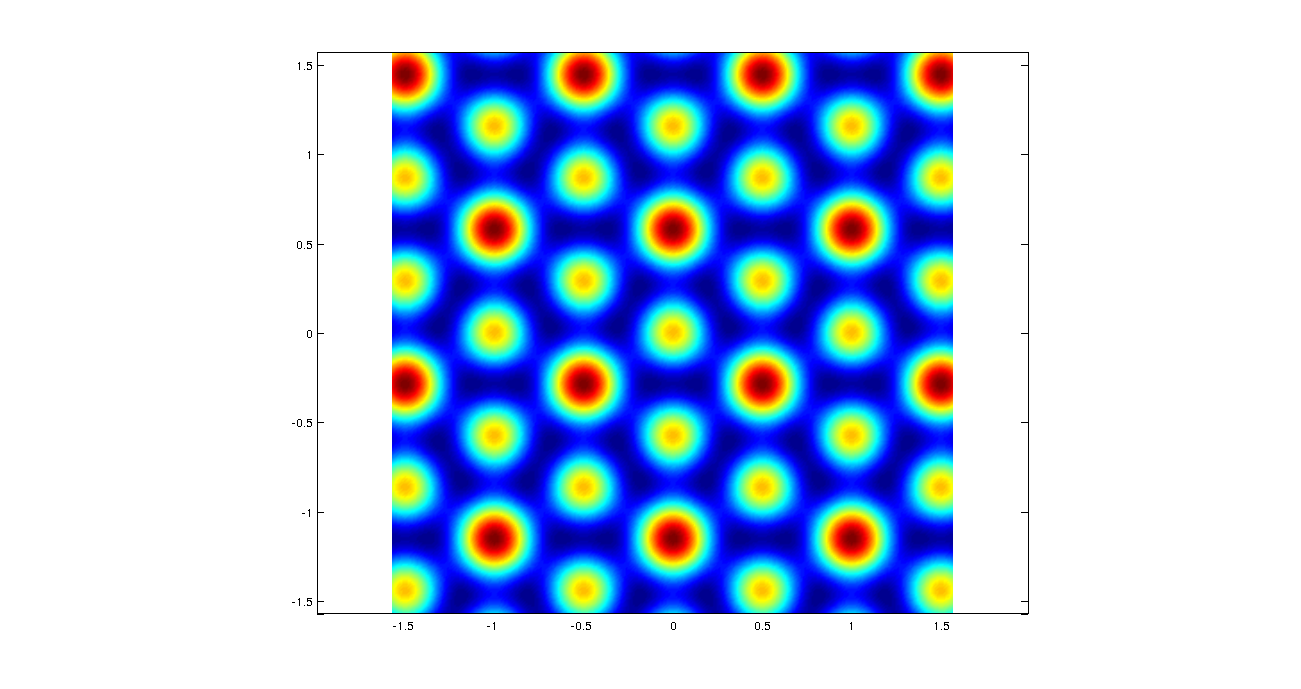}}
(c){\includegraphics[width=0.2\textwidth]{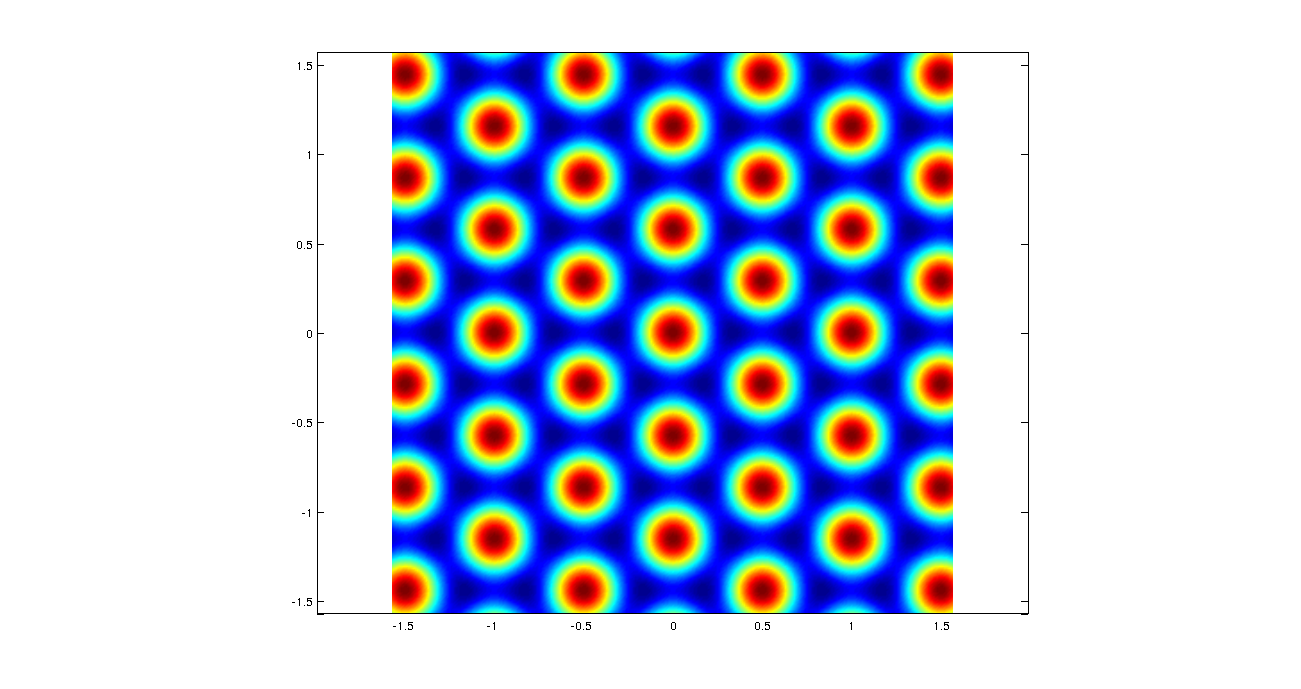}}
(d){\includegraphics[width=0.2\textwidth]{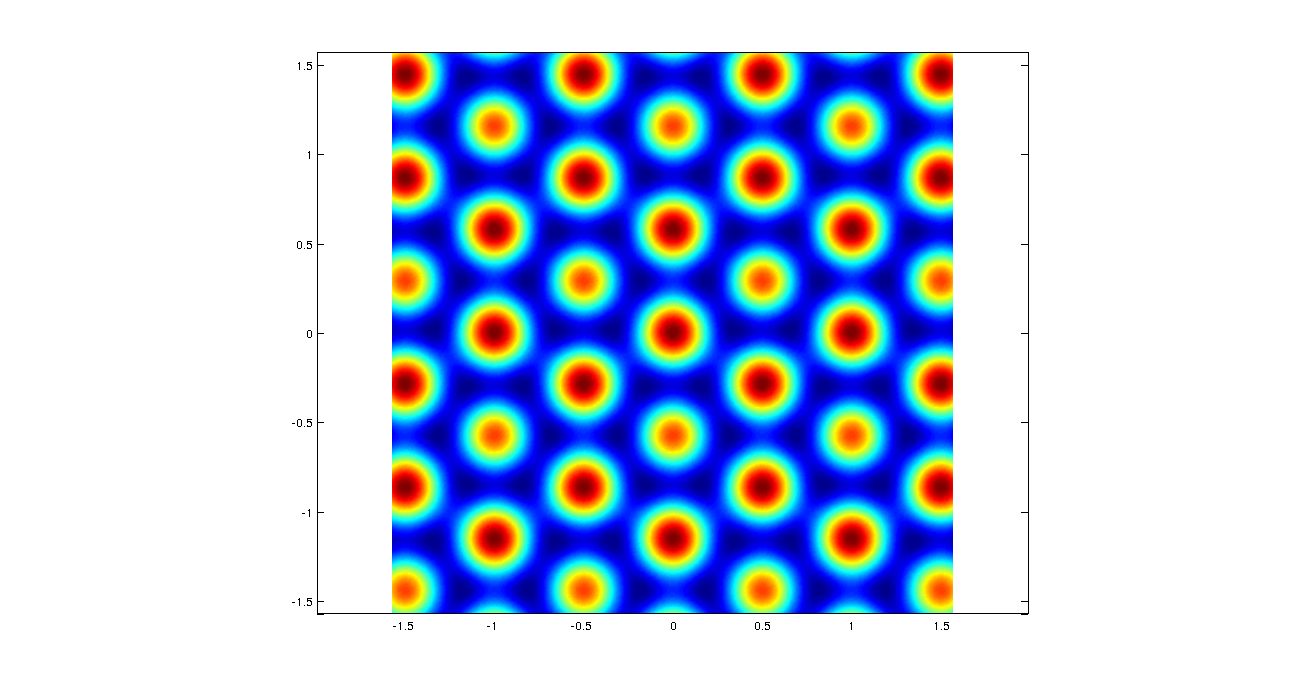}}
\caption{ Projection of pattern $u$ in a 12-dimensional representation with Body Centred Cubic lattice periodicity.  
Contour plots of the integral of $u$ over different depths \(z_{0}\). (a) \(z_{0} = \frac{1}{ 2\sqrt{6}}\). (b) \(z_{0} = \frac{1}{\sqrt{6}}\). (c) \(z_{0} = \frac{3}{2\sqrt{6}}\). (d) \(z_{0} = \frac{2}{\sqrt{6}}\). } 
\end{figure}

As an illustration, consider a systems of generators 
\[l_{1} = (1,0,0), \ l_{2} = (\frac{1}{2},\frac{\sqrt{3}}{2},0), \ l_{3} = (0,0,c) \ \ \ c\neq 0, \ \pm 1\]	
\[r_{1}=(1,0,0), \ r_{2} = (\frac{1}{2},\frac{\sqrt{3}}{2},0),\ r_{3} = (\frac{-1}{2},\frac{\sqrt{3}}{6}, \frac{a}{3}), \ \ \ \ \ a \neq 0\]
for the Hexagonal and Rhombohedral  lattices, respectively. 
A construction similar to that used for the Primitive Cubic lattice may be applied to these two cases,
 but here the parameters  $a$ and $c$ will change the pattern of the projected functions.
 Examples are shown in Figures~\ref{figRhombo} and \ref{figHexa}.
 
 \begin{figure}	\label{figRhombo}
(a){\includegraphics[width=0.2\textwidth]{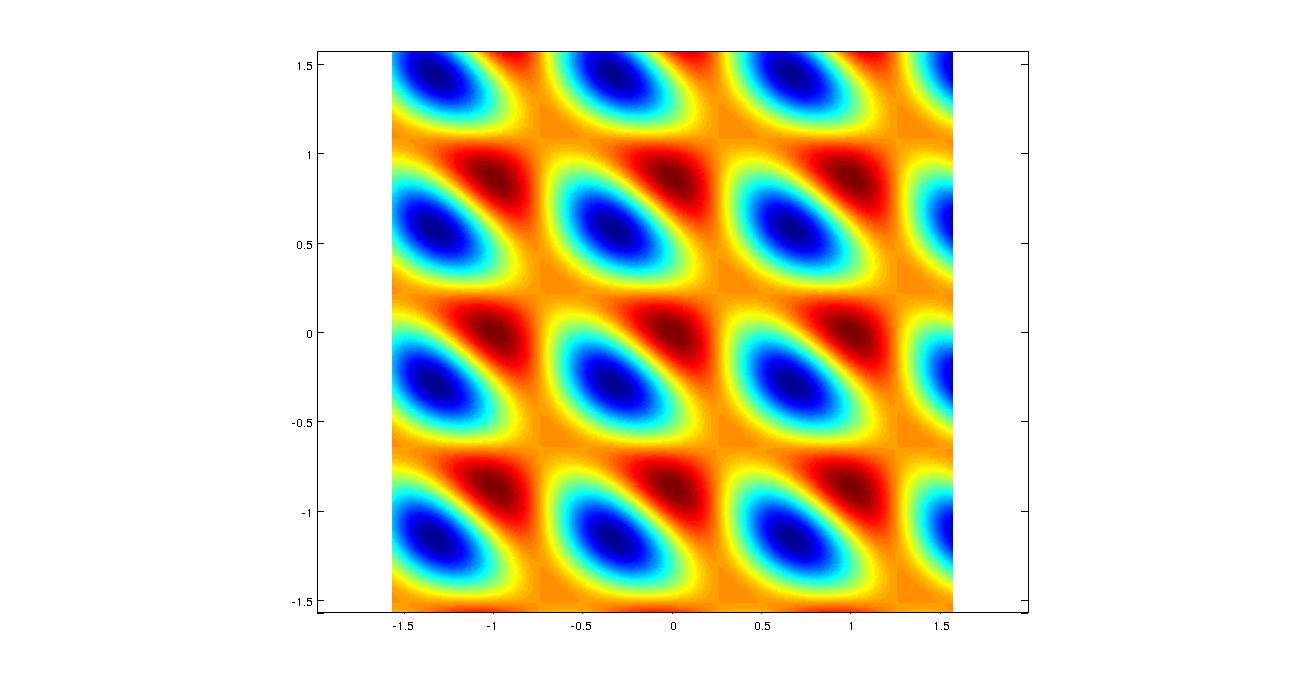}}
(b){\includegraphics[width=0.2\textwidth]{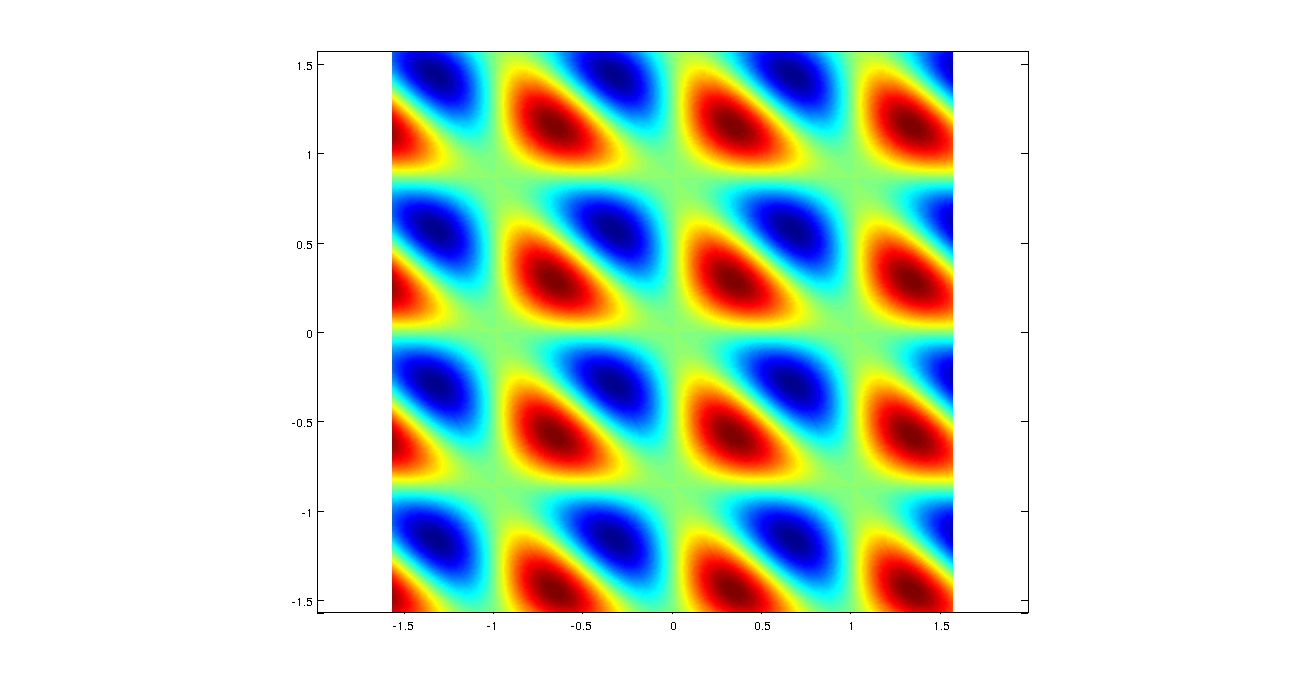}}
(c){\includegraphics[width=0.2\textwidth]{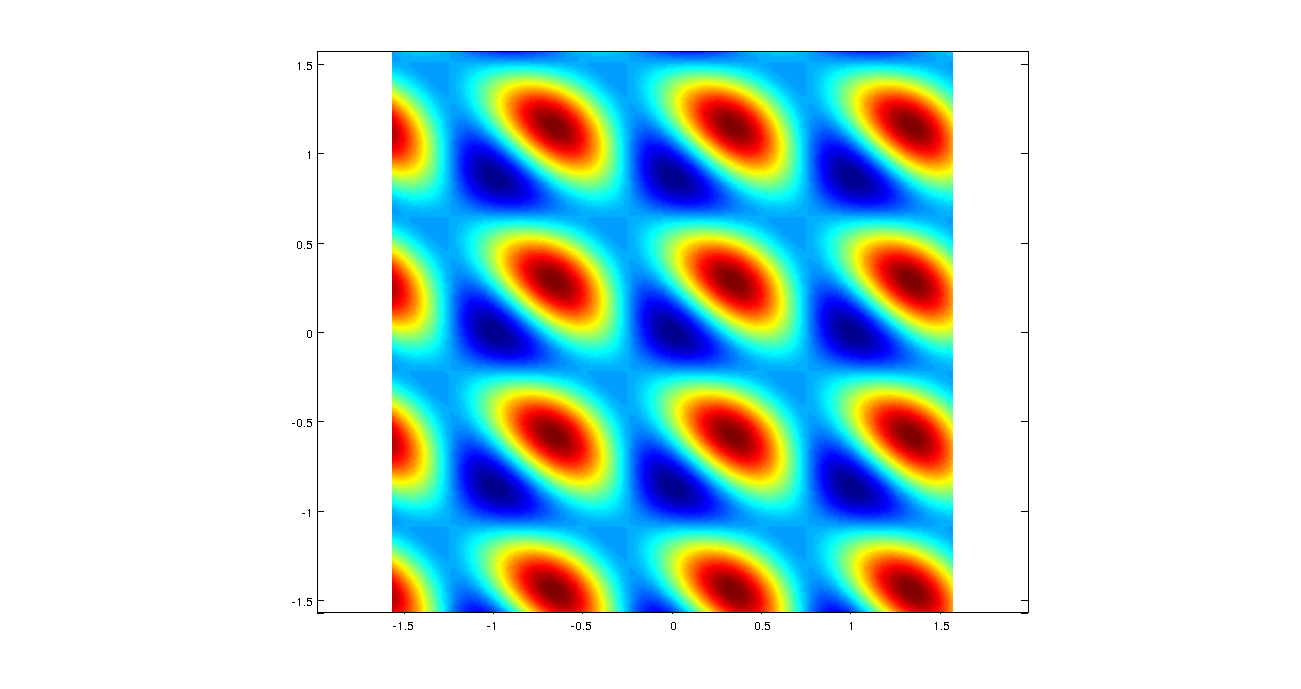}}
(d){\includegraphics[width=0.2\textwidth]{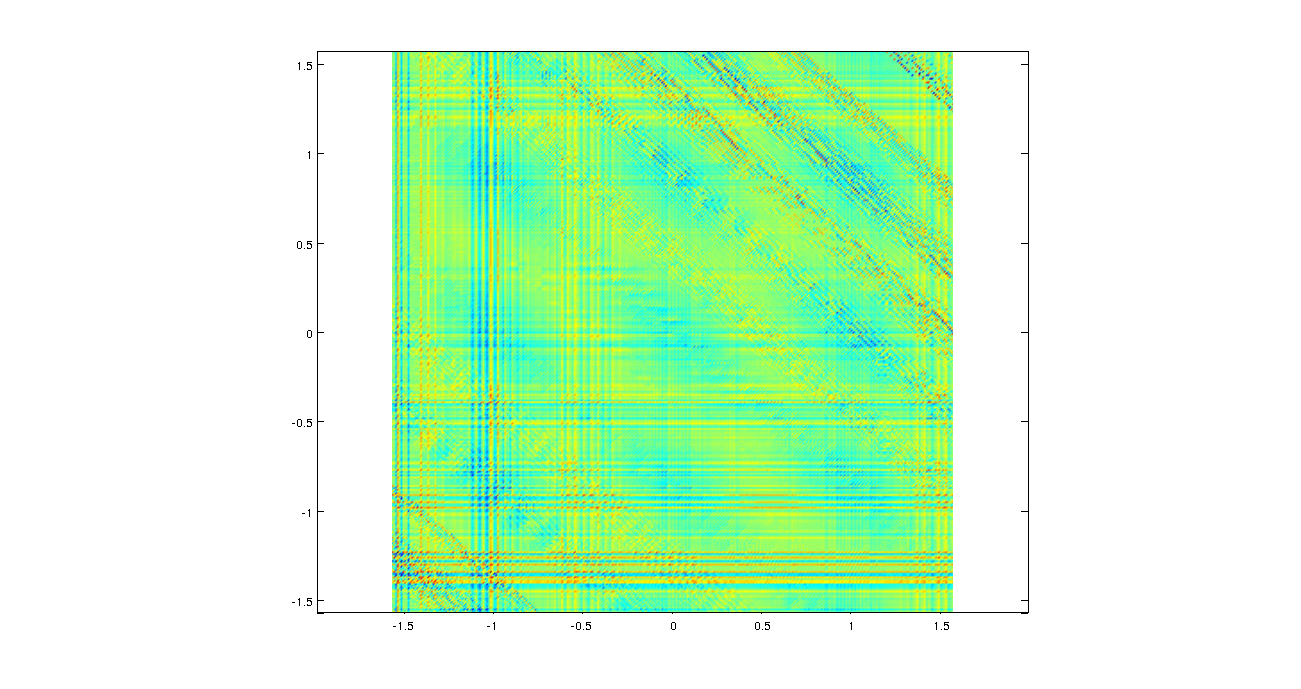}}
\caption{ Projection of pattern $u$  in a 6-dimensional representation with Rhombohedral lattice periodicity.  
Contour plots of the integral of $u$ over different depths \(z_{0}\) with  parameter  $a = 2$ .  (a) \(z_{0} = \frac{2}{6}\). (b) \(z_{0} = \frac{2}{ 3}\). (c) \(z_{0} = \frac{4}{3}\). (d) \(z_{0} = 2\). } 
\end{figure} 
	
\begin{figure}	\label{figHexa}
(a){\includegraphics[width=0.2\textwidth]{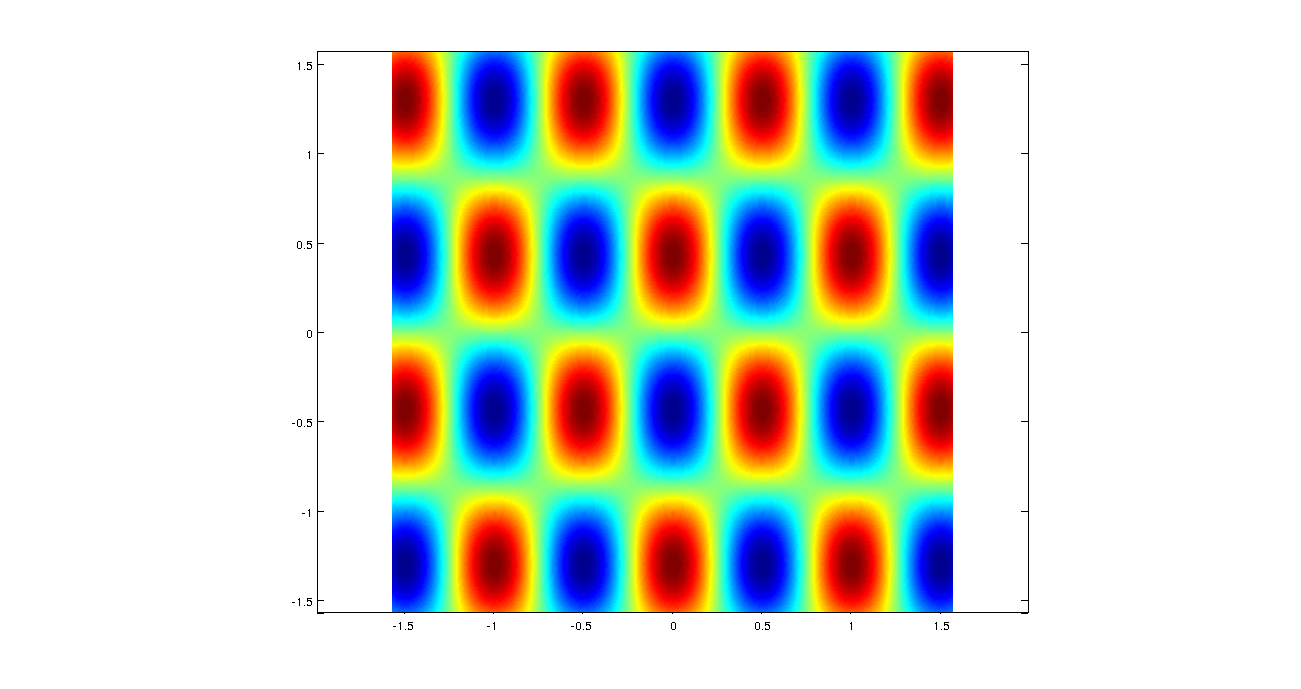}}
(b){\includegraphics[width=0.2\textwidth]{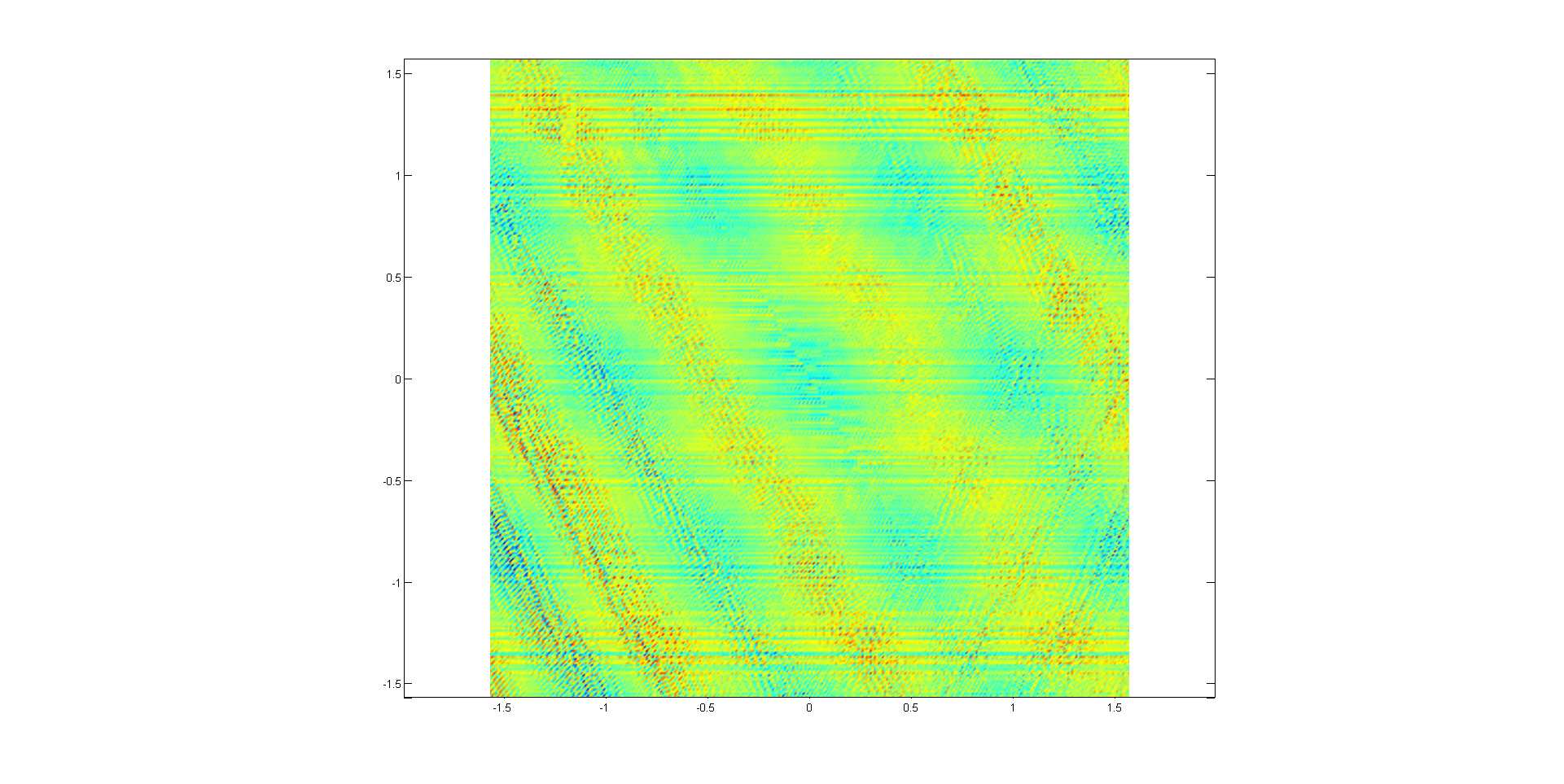}}
\caption{ Projection of pattern $u$ in a 12-dimensional representation with Hexagonal lattice periodicity.  
Contour plots of the integral  of $u$ over different depths \(z_{0}\) with  parameter  $c = 2$ .  (a) \(z_{0} = 1\). (b) \(z_{0} = 2\). } 
\end{figure}

\section{Acknowledgments}

CMUP (UID/MAT/00144/2013) is supported by the Portuguese Government through the Funda\c{c}\~ao para a Ci\^encia e a Tecnologia (FCT) with national (MEC) and European structural funds through the programs FEDER, under the partnership agreement PT2020.
J.F. Oliveira was supported by a grant from the Conselho Nacional de Desenvolvimento Cient\'{i}fico e Tecnol\'{o}gico (CNPq) of Brazil.

\end{document}